\tikzset{every picture/.style={thick}}
\tikzset{every node/.style={circle,draw,inner sep=2pt}}
\newtheorem{thm}{Theorem}[section]
\newtheorem{cor}[thm]{Corollary}
\newtheorem{lem}[thm]{Lemma}
\newtheorem{prop}[thm]{Proposition}
\newtheorem{obs}[thm]{Observation}
\newtheorem{Model}[thm]{Model}
\theoremstyle{definition}
\newtheorem{rem}[thm]{Remark}
\theoremstyle{definition}
\newtheorem{defn}[thm]{Definition}
\newtheorem{ex}[thm]{Example}
\numberwithin{figure}{section}
\newcommand{\pd}{\gamma_P}
\newcommand{\noi}{\noindent}
\newcommand{\bit}{\begin{itemize}}
\newcommand{\eit}{\end{itemize}}
\newcommand{\ben}{\begin{enumerate}}
\newcommand{\een}{\end{enumerate}}
\newcommand{\beq}{\begin{equation}}
\newcommand{\eeq}{\end{equation}}
\newcommand{\bea}{\begin{eqnarray*}}
\newcommand{\eea}{\end{eqnarray*}}
\newcommand{\bean}{\begin{eqnarray}}
\newcommand{\eean}{\end{eqnarray}}
\newcommand{\bpf}{\begin{proof}}
\newcommand{\epf}{\end{proof}\ms}
\newcommand{\bmt}{\begin{bmatrix}}
\newcommand{\emt}{\end{bmatrix}}
\newcommand{\ms}{\medskip}
\newcommand{\beqa}{\begin{array}}
\newcommand{\eeqa}{\end{array}}
\newcommand{\lc}{\left\lceil}
\newcommand{\rc}{\right\rceil}
\begin{document}

\title{Restricted power domination and zero forcing problems}

\author{Chassidy Bozeman\thanks{Department of Mathematics, Iowa State University, Ames, IA 50011, USA (cbozeman@iastate.edu).}\and 
Boris Brimkov\thanks{Department of Computational and Applied Mathematics, Rice University, Houston, TX, 77005, USA (boris.brimkov@rice.edu)} \and 
Craig Erickson\thanks{Department of Mathematics and Computer Science, Grand View University, Des Moines, IA, USA (cerickson@grandview.edu)} \and  
Daniela Ferrero\thanks{Department of Mathematics, Texas State University, San Marcos, TX 78666, USA (dferrero@txstate.edu).} \and 
Mary Flagg\thanks{Department of Mathematics, Computer Science and Cooperative Engineering, University of St. Thomas, 3800 Montrose, Houston, TX 77006, USA (flaggm@stthom.edu).} \and 
Leslie Hogben\thanks{Department of Mathematics, Iowa State University,
Ames, IA 50011, USA  and American Institute of Mathematics, 600 E. Brokaw Road, San Jose, CA 95112, USA
(hogben@aimath.org).}}

\maketitle

\begin{abstract}
\noindent Power domination in graphs arises from the problem of monitoring an electric power system by placing as few measurement devices in the system as possible. A power dominating set of a graph is a set of vertices that observes every vertex in the graph, following a set of rules for power system monitoring. A practical problem of interest is to determine the minimum number of additional measurement devices needed to monitor a power network when the network is expanded and the existing devices remain in place. In this paper, we study the problem of finding the smallest power dominating set that contains a given set of vertices $X$. We also study the related problem of finding the smallest zero forcing set that contains a given set of vertices $X$. The sizes of such sets in a graph $G$ are respectively called the \emph{restricted power domination number} and \emph{restricted zero forcing number} of $G$ subject to $X$. We derive several tight bounds on the restricted power domination and zero forcing numbers of graphs, and relate them to other graph parameters. We also present exact and algorithmic results for computing the restricted power domination number, including integer programs for general graphs and a linear time algorithm for graphs with bounded treewidth. We also use restricted power domination to obtain a parallel algorithm for finding minimum power dominating sets in trees.
\end{abstract}

\noi {\bf Keywords:} Power domination, restricted power domination, zero forcing, restricted zero forcing  
\smallskip

\noi{\bf AMS subject classification} 05C69, 05C50, 05C57, 94C15 

\section{Introduction}

The study of the power domination number of a graph arose from the question of how to monitor electric power networks at minimum cost, and was recast in graph theoretical terms by Haynes et al.~in  \cite{HHHH}. In this model, vertices represent electric nodes and edges represent connections via transmission lines. Electric power companies need to continuously monitor their networks, and one method to do this is to place Phase Measurement Units (PMUs) at selected locations in the system. Because of the cost of a PMU, it is important to minimize the number of PMUs used while maintaining the ability to observe the entire system. The physical laws by which PMUs can observe the network give rise to propagation rules governing power domination; see Section \ref{sPDZdef} for formal definitions.

Electric networks are frequently modified, sometimes by building an extension with new nodes and lines.  The labor to install a PMU and the (non-portable) infrastructure that must be put at the node to support the PMU are significant parts of the total cost of installing a PMU at a given node.  Thus it seems useful to determine the minimum number of additional PMUs needed (and the locations where they should be placed)  when an existing network is expanded and the existing PMUs remain in place.
In this paper we consider the problem of finding a power dominating set that contains a given set of vertices $X$ (locations of the existing PMUs) and minimizes the number of additional PMUs needed, or equivalently, that minimizes the total number of PMUs used subject to the constraint that  the vertices in $X$ are included in the solution.

 The process of zero forcing  was introduced independently in combinatorial matrix theory (where it refers to forcing zero entries in a null vector of a matrix) \cite{AIM08}, and in mathematical physics in the control of quantum systems \cite{graphinfect}.  Zero forcing is closely related to power domination, because power domination can be described as a domination step followed by the zero forcing process (or, since power domination came first, zero forcing can be described as power domination without the domination step).  The  problem of finding a  zero forcing set that is minimum subject to containing a set $X$ of vertices  is also considered here.  Zero forcing sets containing specific vertices have been used in \cite{BFH17} to show the correctness of the Wavefront algorithm for zero forcing (which is implemented in \cite{mrsage}), and  explicitly or implicitly in certain other zero forcing proofs \cite{BFH17a, R12}.  This concept may also have application to control of quantum systems where certain controls are automatically available.  

This paper is organized as follows. In the next section, we recall some graph theoretic notions, specifically those related to power domination and zero forcing, and define restricted power domination and zero forcing.  Additionally, we show that our study of restricted power domination falls under the general study of restricted domination parameters initiated by Goddard and Henning \cite{GH07} as a generalization of the work by Sanchis \cite{S97}. 
In Section \ref{section_struct}, we present structural results and bounds for restricted power domination and restricted zero forcing. In Section \ref{sexactPD}, we give exact results and algorithms for the restricted power domination and restricted zero forcing numbers of certain families of graphs. We conclude with some final remarks and open questions in Section \ref{sconc}. 


 \section{Preliminaries}\label{sPDZdef}


A  {\em graph} $G=(V,E)$  is an ordered pair consisting of a finite nonempty set of {\em  vertices} $V=V(G)$ and a set of {\em edges} $E=E(G)$ containing unordered pairs of distinct vertices (all graphs discussed in this paper are simple, undirected, and finite). The {\em order} of $G$ is denoted by $n=|V(G)|$. 
Two vertices $u$ and $v$ are {\em adjacent}, or {\em neighbors}, if $\{u,v\} \in E$. For $v \in V$, the {\em neighborhood} of $v$ is  $N(v) = \{u \in V: \{u,v\} \in E\}$  (or $N_G(v)$ if $G$ is needed for clarity), and the {\em closed neighborhood} of $v$ is the set $N[v] = N(v) \cup \{ v \}$. Similarly, for a set of vertices $S$, 
$N[S] = \cup_{v \in  S} N[v]$. The {\em degree} of a vertex $v$, denoted $\deg v$, is the cardinality of the set $N(v)$.

In a graph $G$, a vertex $v$ {\it dominates} all vertices in $N_G[v]$, a set of vertices $S$ {\it dominates} 
all vertices in $N_G[S]$, and in particular, when $N_G[S]=V(G)$ we say that $S$ is a {\it  dominating set} of $G$.  
A {\it minimum dominating set} is a dominating set of minimum cardinality, and its cardinality is the {\it domination number } of G, denoted $\gamma (G)$.

In \cite{HHHH}  the authors introduced the related concept of power domination by presenting propagation rules in terms of vertices and edges in a graph. Here we  use a simplified version of the propagation rules that is equivalent to the original (see   \cite{BH05}). For a  set $S\subseteq V$ in a graph $G=(V,E)$, define  $PD(S)\subseteq V$ recursively:

\ben
\item $PD(S)=N[S]$.
\item While there exists $ v\in PD(S)$ such that $|N(v)\setminus PD(S)|=1$: 
$PD(S)=PD(S)\cup N(v)$.
\een

A set $S\subseteq V(G)$ is a {\em power dominating set} of  a graph $G$ if $PD(S)=V(G)$ at the conclusion of the process above.   A {\em minimum power dominating set} is a power dominating set of minimum cardinality, and the {\em power domination number} of $G$,  denoted by $\pd (G)$,  is the cardinality of a minimum power dominating set.

We recall the following well-known upper bounds on the domination and power domination numbers of a graph that will be used throughout.
\begin{thm}[Ore's Theorem]\emph{\cite[Theorem 2.1]{DominationBook}}
\label{thm_n2_bound}
If $G$ is a graph on $n\geq 2$ vertices and $G$ has no isolated vertices, then $\gamma(G)\leq \frac{n}{2}$.
\end{thm}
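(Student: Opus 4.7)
My plan is to prove the bound by exhibiting a dominating set of size at most $n/2$ via a spanning bipartite subgraph with no isolated vertex. The key idea is that in any bipartite graph with no isolated vertex, each side of the bipartition is automatically a dominating set, so the smaller side has at most $n/2$ vertices.

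Concretely, I would take a spanning forest $F$ of $G$ in which every tree has at least two vertices; such an $F$ exists because $G$ has no isolated vertex, so each component of $G$ contains a spanning tree with at least one edge and the disjoint union of these spanning trees is the desired $F$. Being a forest, $F$ is bipartite; let $(A,B)$ be a bipartition of $V(F)=V(G)$.

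Next I would verify that each of $A$ and $B$ dominates $F$. Every vertex of $B$ has some neighbor in $F$ (since no tree of $F$ is trivial), and in a bipartite graph that neighbor must lie in $A$, so $A$ dominates $B$; together with the trivial fact that $A\subseteq N_F[A]$, this shows $A$ is a dominating set of $F$. The symmetric argument works for $B$. Since $|A|+|B|=n$, at least one of them has cardinality at most $n/2$, and it is a dominating set of $F$ and hence of $G$, because any dominating set of a spanning subgraph remains a dominating set in the larger graph. This yields $\gamma(G)\le n/2$.

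There is essentially no significant obstacle in this argument; the only point worth pausing on is ensuring that the spanning forest has no trivial (single-vertex) tree, which is precisely where the hypothesis ``$G$ has no isolated vertices'' is used. A slight variant would replace the spanning forest by any spanning bipartite subgraph with minimum degree at least one, such as one obtained from a maximum cut of $G$; the subsequent bipartition argument is identical. I prefer the spanning-forest formulation since it avoids invoking any extremal construction and makes the use of the hypothesis completely transparent.
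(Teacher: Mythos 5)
Your proof is correct, but note that the paper does not prove this statement at all: it is quoted as Ore's Theorem with a citation to \cite[Theorem 2.1]{DominationBook}, so there is no in-paper argument to match. Your spanning-forest route is a legitimate classical proof and every step checks out: the hypothesis of no isolated vertices is used exactly where you say (each component has order at least two, so each tree of the forest $F$ has an edge and hence $F$ has minimum degree one), each side of a bipartition of $F$ is then a dominating set of $F$, the smaller side has at most $n/2$ vertices, and domination is preserved when passing from a spanning subgraph to $G$ since $N_F[S]\subseteq N_G[S]$. For comparison, the standard argument in the cited reference (going back to Ore) is shorter and non-constructive in a different way: take any \emph{minimal} dominating set $D$; minimality together with the absence of isolated vertices implies that $V\setminus D$ is also a dominating set, so $G$ has two disjoint dominating sets and the smaller has cardinality at most $n/2$. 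That argument buys slightly more --- it shows the domatic number is at least $2$ --- while yours is arguably more transparent and immediately algorithmic (build a spanning forest, properly $2$-color it, take the smaller color class, all in linear time). Either proof would be acceptable here.
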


\begin{thm}\emph{\cite{BH05,ZKC06}}
\label{thm_n3_bound}
If $G$ is a connected graph on $n\geq 3$ vertices, then $\gamma_P(G)\leq \frac{n}{3}$.
\end{thm}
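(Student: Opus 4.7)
The plan is to proceed by induction on $n$. For the base cases $n\in\{3,4,5\}$, every connected graph $G$ on $n$ vertices has $\gamma_P(G)=1\leq n/3$; this is a finite check performed by selecting a vertex $v$ of maximum degree and verifying that $PD(\{v\})=V(G)$ in at most two propagation steps, since $|N[v]|\geq 3$ already and the remaining at most two vertices each have some neighbor with a single unobserved edge.

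For the inductive step $(n\geq 6)$, I would attempt a ``pendant peeling.''  Specifically, I would look for three vertices $t_1,t_2,t_3$ and a vertex $u\in V(G)\setminus\{t_1,t_2,t_3\}$ such that $T:=\{t_1,t_2,t_3\}$ is joined to $G-T$ only via the edge $\{u,t_1\}$, the graph $G':=G-T$ is connected (with at least $3$ vertices), and once $u$ is observed it propagates into $T$ in sequence.  The prototypical instance is a pendant path $u\!-\!t_1\!-\!t_2\!-\!t_3$ of length $3$ with $t_3$ a leaf and $\deg_G t_1=\deg_G t_2=2$.  By the inductive hypothesis, $G'$ admits a power dominating set $S'$ with $|S'|\leq (n-3)/3$.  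Since every vertex $w\in V(G')\setminus\{u\}$ satisfies $N_G(w)=N_{G'}(w)$, the propagation of $S'$ in $G'$ lifts verbatim to the propagation of $S'$ in $G$, so after lifting, the set $V(G')$ is observed in $G$.  Adding $u$ to $S'$ then forces $t_1$, $t_2$, and $t_3$ in sequence via the degree-$2$ structure of $t_1,t_2$, yielding a power dominating set of $G$ of size at most $(n-3)/3+1=n/3$.

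The main obstacle is that not every connected graph on $n\geq 6$ vertices admits such a pendant path: minimum degree at least $2$ rules out leaves, and minimum degree at least $3$ additionally rules out degree-$2$ internal vertices.  For such graphs I would pursue alternative peelings via the block-cut tree of $G$ (removing small leaf blocks, each at the cost of one PMU) or, when no leaf block is suitable, a global discharging argument: assign each PMU a ``budget'' of three vertices and redistribute unused budget along propagation chains to cover vertices where the naive count falls short.  Unifying the pendant-path case, the leaf-block case, and the highly connected case into a single exhaustive analysis is the technical heart of the proof, and carrying it out rigorously is essentially the content of \cite{BH05,ZKC06}.
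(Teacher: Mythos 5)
First, note that the paper offers no proof of this statement to compare against: Theorem~\ref{thm_n3_bound} is quoted as a known result of \cite{BH05,ZKC06}, so your argument has to stand entirely on its own. The parts you do carry out are essentially sound: the base cases $3\le n\le 5$ do all satisfy $\gamma_P(G)=1$ (though your ``at most two propagation steps from a maximum-degree vertex'' description is looser than a proof and needs the small cases $P_5$, $C_5$, etc.\ checked individually), and the lifting argument in the pendant-path case is correct --- since $u$ is the only vertex of $G'$ adjacent to $T=\{t_1,t_2,t_3\}$ and you place $u$ in the set, $t_1$ is dominated at step zero, every force of $S'$ in $G'$ remains valid in $G$, and $t_1,t_2,t_3$ are forced along the degree-$2$ path, giving a power dominating set of size at most $(n-3)/3+1=n/3$.

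The genuine gap is that the inductive step only applies when $G$ contains a pendant path $u$--$t_1$--$t_2$--$t_3$ with $\deg_G t_1=\deg_G t_2=2$ and $t_3$ a leaf, and this configuration is absent not only in graphs of minimum degree at least $2$ or $3$ (cycles, cubic graphs, all $2$-connected graphs) but even in large classes of trees --- stars $K_{1,n-1}$ and spiders whose legs all have length at most $2$, for instance. Your two proposed fallbacks do not close this. Peeling a leaf block ``at the cost of one PMU'' breaks the budget whenever the block has fewer than $3$ vertices: removing a pendant $K_2$ block at cost $1$ yields $(n-2)/3+1=(n+1)/3>n/3$, and when the block is large there is no reason one PMU suffices for it; so the block-cut-tree step as stated would fail on exactly the graphs that need it. The discharging sketch is circular: redistributing ``unused budget along propagation chains'' presupposes a power dominating set and its forcing structure, which is the object you are trying to construct. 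You acknowledge this yourself by deferring the unification of the cases to \cite{BH05,ZKC06} --- but that case analysis (handled in those papers by a careful induction with a structural reduction, respectively an algorithmic construction) is the entire content of the theorem. As submitted, this is a proof plan with the hard part missing, not a proof.
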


Zero forcing can be described as a coloring game on the vertices of $G$.  The {\em color change rule} is: If $u$ is a blue vertex and exactly one  neighbor $w$ of $u$ is white, then change the color of $w$ to blue.  We say $u$ {\em forces} $w$ and denote this  by $u\to w$.
Given a set $B$ of blue vertices (all other vertices being white), the \emph{closure} of $B$, denoted $cl(B)$, is the set of blue vertices obtained after the color change rule is applied until no new vertex can be forced; it can be shown that $cl(B)$ is uniquely determined by $B$ (see \cite{AIM08}). A \emph{zero forcing set} is a set for which $cl(B)=V(G)$; a {\em minimum zero forcing set} is a zero forcing set of minimum cardinality, and the {\em zero forcing number} of $G$, denoted $Z(G)$, is the cardinality of a minimum zero forcing set.

A \emph{chronological list of forces}  $\mathcal{F}$ associated with a zero forcing set $B$ is a sequence of forces applied to obtain $cl(B)$ in the order they are applied. A \emph{forcing chain} for a chronological list of forces is a maximal sequence of vertices  $(v_1,\ldots,v_k)$ such that the force $v_i\to v_{i+1}$ is in $\mathcal{F}$ for $1\leq i\leq k-1$; we say the forcing chains are {\em associated with $B$}. Each forcing chain produces a distinct path in $G$, one of whose endpoints is in $B$; the other is called a \emph{terminal} (and the two can coincide).

\begin{defn}\label{restrictPD}
Let $G=(V,E)$ be a graph and let $X\subseteq V$.   A set $S\subseteq V(G)$ is a {\em power dominating set of   $G$ subject to $X$}  if $S$ is a power dominating set of   $G$ and $X\subseteq S$. The {\em restricted power domination number of $G$ subject to $X$} is the minimum number of vertices in a power dominating set that contains $X$, and is denoted by $\pd(G;X)$.

A set $S\subseteq V(G)$ is a {\em zero forcing set of   $G$ subject to $X$}  if $S$ is a zero forcing set of   $G$ and $X\subseteq S$.  The {\em restricted zero forcing number of $G$ subject to  $X$} is the minimum number of vertices in a zero forcing set that contains $X$, and is denoted by $Z(G;X)$.
\end{defn}


\begin{obs}  For any graph $G$, $\pd(G;\emptyset)=\pd(G)$ and $Z(G;\emptyset)=Z(G)$.
\end{obs}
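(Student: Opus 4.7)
The plan is to unwind the definitions: once $X=\emptyset$, the extra constraint "$X\subseteq S$" in Definition \ref{restrictPD} is vacuous because every subset of $V(G)$ contains the empty set, so the set of admissible $S$ over which we minimize coincides with the set of \emph{all} power dominating sets (respectively, all zero forcing sets) of $G$.

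In more detail, I would first recall that $\gamma_P(G;X)$ is defined as $\min\{|S| : S \text{ is a power dominating set of } G \text{ and } X \subseteq S\}$. Setting $X = \emptyset$ and observing $\emptyset \subseteq S$ for every $S \subseteq V(G)$, the defining set becomes $\{|S| : S \text{ is a power dominating set of } G\}$, whose minimum is exactly $\gamma_P(G)$ by definition. The identical one-line argument with "power dominating" replaced by "zero forcing" yields $Z(G;\emptyset) = Z(G)$.

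There is no real obstacle here; the observation is a sanity check confirming that the restricted parameters genuinely generalize the unrestricted ones, and the proof is a single invocation of the fact that the empty set is a subset of every set. I would present it as a two-sentence proof rather than a multi-step argument.
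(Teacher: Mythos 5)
Your proof is correct and matches the paper's (implicit) reasoning exactly: the paper states this as an unproved observation precisely because, as you note, the constraint $X\subseteq S$ is vacuous when $X=\emptyset$, so the restricted minimum coincides with the unrestricted one by definition. Nothing is missing, and your two-sentence presentation is appropriate.
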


\begin{ex}\label{ex:path}  For a path $P_n$ and any nonempty set $X$, $\pd(P_n;X)=|X|$, because any nonempty set of vertices is a power dominating set of a path. If  $X=\{x\}$ and $x$ is an endpoint of the path, then $Z(G;X)=1=Z(G)$; if $x$ is not an endpoint of the path, then $Z(G;X)=2$.
\end{ex}

The study of restricted power domination and restricted zero forcing fit into the general study of restricted domination parameters initiated by Goddard and Henning in  \cite{GH07} as a generalization of the work of   Sanchis     \cite{S97} and Henning \cite{H02} that investigated  the restricted version of standard graph domination.
One motivation for Sanchis' study of restricted domination  was a problem in resource allocation in distributed computer systems.  We follow the notation introduced by Henning (to avoid a conflict with an existing notation) and generalized by Goddard and Henning:   Let $G$ be a graph, and $X\subseteq V(G)$. Define $r(G,X,\gamma)$ to be the cardinality of a smallest dominating set of $G$ that contains $X$. The $k$-{\em restricted domination number} of $G$ is $r_k(G,\gamma)=\max \{ r(G,X,\gamma): X\subseteq V(G), |X|=k\}$. 

Returning to the more general setting and using the terminology of \cite{GH07}, let $\pi$ be a property of sets, such as being a dominating, power dominating, or zero forcing set and denote the minimum cardinality of a $\pi$-set by $f_\pi$.  Given a set of vertices $X$, define $r(G,X,f_\pi)$ to be the minimum cardinality of a $\pi$-set containing $X$.  Thus, $\pd(G;X)=r(G,X,\pd)$ and $Z(G;X)=r(G,X,Z)$.
Given a non-negative integer $k$, the {\em $k$-restricted $\pi$-number}  of  $G$ is $r_k(G,f_\pi)=\max \{ r(G,X,f_\pi): X\subseteq V(G), |X|=k\}$. 

\begin{rem}\label{rem:rd} Since for any graph $G$, $\gamma _P(G)\leq \gamma (G)$, it follows that  $r(G,X,\gamma_P)\leq  r(G,X,\gamma)$ for any $X\subseteq V(G)$.  As a consequence,  $r_k(G,\gamma_P)\leq r_k(G,\gamma)$ and $r(G,X,\gamma_P)\leq r_k(G,\gamma)$ for $k=|X|$.
\end{rem}

Other generalizations and extensions of power domination and zero forcing have also been explored; for example, variants of the problems obtained by modifying the color change rules have received significant attention \cite{CDMR12,chang15,FHKY17,yang17}, as has the problem of studying the number of timesteps involved in the process of zero forcing or power dominating a graph \cite{ferrero17,hogben12,liao16}.

Algorithmically, given a graph $G=(V,E)$ and sets $X\subseteq V$ and $S\subseteq V$, deciding if $S$ is a power dominating set of $G$ subject to $X$ is simply a matter of determining whether $S$ contains $X$ and whether $S$ is a power dominating set.  As a consequence, given an arbitrary graph $G=(V,E)$, an arbitrary set of vertices $X\subseteq V$, and an  integer $k\geq |X|$, the computational complexity of deciding if a graph $G$ has a power dominating set of cardinality $k$ is equivalent to that of determining if $G$ has a power dominating set of cardinality $k$ that also contains $X$. The former problem has been proven to be  $NP$-complete for bipartite graphs \cite[Theorem 5]{HHHH}, chordal graphs  \cite[Theorem 5]{HHHH}, circle and planar graphs \cite[Theorem 1]{GNR08}.


\section{Structural results and bounds}
\label{section_struct}

In this section, we present a number of structural results and bounds on the restricted power domination and restricted zero forcing numbers of a graph. We begin with the following simple bounds.

\begin{prop}\label{trivialbounds}
\label{obs_trivial}
For any graph $G=(V,E)$ and any set $X\subseteq V$, 
\begin{eqnarray*}
\gamma_P(G)&\leq& \gamma_P(G;X)\leq \gamma_P(G)+|X|,\\
Z(G)&\leq& Z(G;X)\leq Z(G)+|X|,\\
|X|&\le&\pd(G;X)\le Z(G;X),
\end{eqnarray*}
and all bounds are tight.
\end{prop}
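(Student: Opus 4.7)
The six inequalities split naturally into ``any restricted $\pi$-set is a $\pi$-set,'' ``adding $X$ to a minimum $\pi$-set gives a valid restricted $\pi$-set,'' and the two inclusion facts.

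The plan is as follows. First, for the left-hand inequalities $\gamma_P(G)\le \gamma_P(G;X)$ and $Z(G)\le Z(G;X)$, I would observe that any power dominating (resp.\ zero forcing) set of $G$ subject to $X$ is by definition a power dominating (resp.\ zero forcing) set of $G$, so the minimum cardinality over the restricted family is at least the minimum cardinality over the larger family. Second, for the upper bounds $\gamma_P(G;X)\le\gamma_P(G)+|X|$ and $Z(G;X)\le Z(G)+|X|$, I would take a minimum power dominating set $S_P$ and a minimum zero forcing set $S_Z$ of $G$, and note that $S_P\cup X$ is power dominating and contains $X$ (since adding vertices to $S_P$ preserves the property: $PD(S_P)\subseteq PD(S_P\cup X)$), and similarly $S_Z\cup X$ is zero forcing and contains $X$. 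This yields $\gamma_P(G;X)\le |S_P\cup X|\le |S_P|+|X|$ and $Z(G;X)\le |S_Z\cup X|\le |S_Z|+|X|$.

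For the bottom line, $|X|\le\gamma_P(G;X)$ is immediate because every power dominating set subject to $X$ contains $X$. For $\gamma_P(G;X)\le Z(G;X)$, I would take a minimum zero forcing set $B$ of $G$ subject to $X$; since $cl(B)=V(G)$ implies $PD(B)=V(G)$ (the zero forcing process is exactly the propagation phase of power domination without the initial closed-neighborhood step, and taking $N[B]\supseteq B$ only helps), $B$ is a power dominating set of $G$ containing $X$, so $\gamma_P(G;X)\le|B|=Z(G;X)$.

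For tightness, most of the bounds are realized by very small examples and I would simply collect them. The choice $X=\emptyset$ (together with the preceding observation) makes both lower bounds $\gamma_P(G)\le\gamma_P(G;X)$ and $Z(G)\le Z(G;X)$ sharp, and makes $|X|\le\gamma_P(G;X)$ sharp when $\gamma_P(G)=0$, i.e., for the empty graph; alternatively, taking $X$ to be a minimum power dominating set shows $|X|=\gamma_P(G;X)$. Example~\ref{ex:path} already shows that $Z(P_n;X)=Z(P_n)+|X|$ when $X$ is a single interior vertex of a path, giving tightness of $Z(G;X)\le Z(G)+|X|$. For $\gamma_P(G;X)\le\gamma_P(G)+|X|$, I would use the star $K_{1,n}$ with $n\ge 3$ and $X$ consisting of two leaves: one checks directly that no set of size $2$ containing $X$ power dominates $K_{1,n}$, while adding the center gives a power dominating set, so $\gamma_P(K_{1,n};X)=3=\gamma_P(K_{1,n})+|X|$. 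Finally, $\gamma_P(G;X)=Z(G;X)$ is realized by $G=P_n$ with $X$ an endpoint (both equal $1$).

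The only place where any actual verification is needed is the star example for the $\gamma_P(G)+|X|$ bound; everything else is just unpacking the definitions, so I do not anticipate a real obstacle.
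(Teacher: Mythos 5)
Your proof is essentially the paper's proof: lower bounds from the definitions, upper bounds by adjoining $X$ to a minimum power dominating or zero forcing set, the last inequality from the fact that a zero forcing set is a power dominating set, and tightness from a star (for the $\gamma_P(G)+|X|$ bound) and various choices of $X$ in a path (for the rest). There is, however, one concrete slip in the only step you flagged as needing verification: your star example with $n\ge 3$ fails at $n=3$. In $K_{1,3}$ with $X=\{\ell_1,\ell_2\}$ two leaves, the domination step colors $\{\ell_1,\ell_2,c\}$, after which the center $c$ has exactly one white neighbor $\ell_3$ and forces it; hence $\gamma_P(K_{1,3};X)=2=|X|$, not $3$, contradicting your claim that no set of size $2$ containing $X$ power dominates. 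The example is fine for $n\ge 4$ (then $c$ has two white leaf neighbors after domination, so $X$ alone fails and $X\cup\{c\}$ is needed), which is exactly why the paper's version imposes $1\le s\le p-2$ on the number $s$ of leaves in $X$ relative to $K_{1,p}$. Correcting $n\ge 3$ to $n\ge 4$ repairs the argument; everything else in your proposal is sound and matches the paper's reasoning.
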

\bpf  The lower bounds are all immediate from the definitions.  The first two upper bounds follow by adding the vertices of $X$ into a power dominating or zero forcing set, and the last follows from the fact that any zero forcing set is a power dominating set. For tightness: Let $X$ consist of $s$ leaves in $K_{1,p}$ where $1\le s\le p-2$, and note that $\pd(K_{1,p};X)=1+s=\pd(K_{1,p})+|X|$.   Various choices of $X$ in the path can be used to show the remaining bounds are tight; see Example \ref{ex:path}.
\epf

By a similar reasoning, the following more general bounds hold.
\begin{prop}
For any graph $G=(V,E)$ and any sets $Y\subseteq X\subseteq V$, 
\begin{eqnarray*}
\gamma_P(G;Y)&\leq& \gamma_P(G;X)\leq \gamma_P(G;Y)+|X\backslash Y|,\\
Z(G;Y)&\leq& Z(G;X)\leq Z(G;Y)+|X\backslash Y|,
\end{eqnarray*}
and all bounds are tight.
\end{prop}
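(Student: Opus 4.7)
The plan is to mirror the proof of Proposition \ref{trivialbounds}, since these inequalities are structurally the same as before but with the empty baseline replaced by $Y$. The lower bounds come directly from the containment $Y\subseteq X$: any set $S$ that contains $X$ automatically contains $Y$, so any power dominating (respectively zero forcing) set of $G$ subject to $X$ is in particular one subject to $Y$. Taking the minimum over the (smaller) class on the left and the (larger) class on the right gives $\gamma_P(G;Y)\le \gamma_P(G;X)$ and $Z(G;Y)\le Z(G;X)$.

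For the upper bounds, I would start from an optimal set for the $Y$ problem and augment it. Let $S$ be a minimum power dominating set of $G$ subject to $Y$, so $|S|=\gamma_P(G;Y)$ and $Y\subseteq S$. Then $S':=S\cup (X\setminus Y)$ contains $X$, and $S'$ is still a power dominating set because adding vertices to a power dominating set preserves the property (the extra vertices enter $PD(S')$ at the initial closed-neighborhood step). Hence $\gamma_P(G;X)\le |S'|\le |S|+|X\setminus Y|=\gamma_P(G;Y)+|X\setminus Y|$. The identical argument with $Z$ in place of $\gamma_P$ handles the zero forcing inequality.

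For tightness, the lower bounds are achieved trivially by taking $Y=X$, in which case both sides coincide. For the upper bounds, I would recycle the star example from Proposition \ref{trivialbounds}: take $G=K_{1,p}$, let $Y=\emptyset$, and let $X$ consist of $s$ leaves with $1\le s\le p-2$. Then $\gamma_P(G;Y)=\gamma_P(G)=1$ and, as in the previous proposition, $\gamma_P(G;X)=1+s=\gamma_P(G;Y)+|X\setminus Y|$. The analogous tightness instance for $Z$ can be extracted from Example \ref{ex:path} by choosing $Y=\emptyset$ and $X$ to contain an internal vertex of a path.

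There is no real obstacle; the only thing to be careful about is verifying that appending vertices of $X\setminus Y$ to a power dominating set really preserves the power domination property (immediate from step 1 of the definition of $PD$), and similarly for zero forcing (any superset of a zero forcing set is zero forcing, since blue vertices never become white during the color change process). Thus the whole argument is a short definitional unwinding plus two examples.
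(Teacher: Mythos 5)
Your proof is correct and is precisely the ``similar reasoning'' the paper invokes: it states this proposition without proof, pointing to the argument for Proposition \ref{trivialbounds}, which is exactly what you carry out (minimizing over a smaller class for the lower bounds, augmenting an optimal $Y$-restricted set by $X\setminus Y$ for the upper bounds, and reusing the star and path examples for tightness). The only cosmetic remark is that your lower-bound tightness via $Y=X$ is degenerate, though valid since the statement allows $Y=X$; a nondegenerate instance with $Y\subsetneq X$ (e.g.\ a disconnected graph with one specified vertex per component) would be slightly stronger.
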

In the next section, we give structural results related to the restricted power dominating 
sets of a graph. 
These lead to improved bounds on the restricted power domination and zero forcing numbers 
 in Sections \ref{ss:boundPD} and \ref{ss:boundZ}.    In Section \ref{ss:connex} we establish a relationship between 
 the restricted power domination number and the restricted zero forcing number.

\subsection{Structural results for power domination}\label{ss:structPD}

The next lemma states well-known results about mandatory vertices in power dominating sets; we include the brief proof for completeness (and because it has sometimes been misstated in the literature without proof).  A {\em leaf} is a vertex of degree one.

\begin{lem}
\label{obs_leaf2}
Let $G=(V,E)$ be a graph.
\ben[(1)]
\item If $v\in V$ is incident to three or more leaves, then $v$ is contained in every minimum power dominating set of $G$.
\item  If $v\in V$ is incident to exactly two leaves, then every minimum power dominating set contains either v or one of its two leaf neighbors.  
\item\label{cx3} There is a minimum power dominating set of $G$ that contains every vertex $v$ that is incident to exactly two leaves.
\een
\end{lem}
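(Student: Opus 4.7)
The plan is to prove the three parts in sequence, leaning on the monotonicity of the closure operator $PD$: whenever $N[S]\subseteq N[T]$ we have $PD(S)\subseteq PD(T)$, because the initial domination step already satisfies the containment and the forcing rule is monotone in the current blue set. The basic swap we will use repeatedly is that for any leaf $\ell$ adjacent to a vertex $v$, $N[\ell]=\{v,\ell\}\subseteq N[v]$, so replacing $\ell$ by $v$ in any set $S$ can only enlarge $N[S]$, and hence $PD(S)$.

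For part (1), suppose for contradiction that $S$ is a minimum power dominating set with $v\notin S$, and let $\ell_1,\dots,\ell_k$ be the leaves at $v$ with $k\geq 3$. Each $\ell_i$ can enter $PD(S)$ only by lying in $S$ or by being forced by its sole neighbor $v$; since $v$ forces at most once in total (the moment it forces, exactly one of its neighbors is white and turns blue forever after), at most one of the $\ell_i$ can be forced. Hence at least $k-1\geq 2$ of the leaves lie in $S$, so $S'=(S\setminus\{\ell_1,\dots,\ell_k\})\cup\{v\}$ satisfies $|S'|\leq|S|-1$, and by monotonicity $PD(S')\supseteq PD(S)=V(G)$, contradicting the minimality of $S$. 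For part (2), we show the stronger statement that every PDS $S$ contains $v$ or one of its two leaves $\ell_1,\ell_2$: if none of them is in $S$, then neither leaf is in $N[S]$ initially, and the only potential forcer of each is $v$, which cannot force $\ell_1$ while $\ell_2$ is still white, nor the reverse, so neither leaf is ever added to $PD(S)$, a contradiction.

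For part (3), let $V_2$ be the set of vertices incident to exactly two leaves, choose an arbitrary minimum PDS $S_0$, and process the elements of $V_2$ one at a time: at step $i$, if $v_i\in S_{i-1}$ set $S_i=S_{i-1}$; otherwise invoke part (2) on $S_{i-1}$ (which is still a PDS of the same cardinality as $S_0$) to obtain a leaf $\ell\in S_{i-1}$ adjacent to $v_i$ and set $S_i=(S_{i-1}\setminus\{\ell\})\cup\{v_i\}$. This preserves both the cardinality and, by the leaf-to-support swap above, the PDS property. No two iterations interfere, since every leaf has a unique neighbor and no $v_j\in V_2$ is itself a leaf, so once a vertex of $V_2$ is added it is never removed; the final set contains all of $V_2$ and is the required minimum PDS. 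The main obstacle we anticipate is resisting the temptation in part (1) to reason chronologically about when each $\ell_i$ becomes blue; the clean route is the one-shot global replacement together with the monotonicity of $PD$.
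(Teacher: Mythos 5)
Your proof is correct and follows essentially the same route as the paper's: part (2) is argued identically, part (1) uses the same leaf-counting (at least two leaves of $v$ must lie in $S$) followed by the same swap $(S\setminus\{\ell_1,\ell_2\})\cup\{v\}$, and part (3) is the same leaf-to-support exchange applied to each relevant vertex. The only cosmetic differences are that you justify the swaps via an explicit monotonicity lemma for $PD$ (which the paper leaves implicit) and count forced leaves via ``$v$ forces at most once'' rather than the paper's observation that two white leaves at $v$ can never be dominated or forced.
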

\bpf
First let $v\in V$ be incident to at least three leaves and suppose there is a minimum power dominating set $S$ of $G$ that does not contain $v$. If $S$ excludes two or more of the leaves of $G$ incident to $v$, then those leaves cannot be dominated or forced at any step. Thus, $S$ excludes at most one leaf incident to $v$, which means $S$ contains at least two leaves $\ell_1$ and $\ell_2$ incident to $v$. Then, $(S\backslash\{\ell_1,\ell_2\})\cup\{v\}$ is a smaller power dominating set than $S$, which is a contradiction.

Now consider the case in which $v\in V$ is incident to exactly two leaves, $\ell_1$ and $\ell_2$, and suppose there is a minimum power dominating set $S$ of $G$ such that $\{v,\ell_1,\ell_2\}\cap S = \emptyset$. Then neither $\ell_1$ nor $\ell_2$ can be dominated or forced at any step, contradicting the assumption that $S$ is a power dominating set.  If $S$ is a power dominating set that contains $\ell_1$ or $\ell_2$, say $\ell_1$, then $(S\backslash\{\ell_1\})\cup\{v\}$ is also a power dominating set and has the same cardinality.  Applying this to every vertex incident to exactly two leaves produces the minimum power dominating set required by \eqref{cx3}.
\epf

\begin{defn} \label{def_leaf}
Given a graph $G=(V,E)$ and a set $X\subseteq V$, define $\ell _r(G,X)$ as the graph obtained by attaching $r$ leaves to each vertex in $X$. If $X=\{v_1,\ldots,v_k\}$, we denote the $r$ leaves attached to vertex $v_i$ as $\ell _i^1,\ldots , \ell _i^r$ for each $i=1,\ldots ,k$, so that $V(\ell _r(G,X))=V\cup\{\ell_i^j:1\leq i\leq k, 1\leq j\leq r\}$ and $E(\ell _r(G,X))=E\cup\{v_i\ell_i^j:1\leq i\leq k, 1\leq j\leq r\}$.
\end{defn}

We will now establish a relationship between $\gamma_P(G;X)$ and $\ell_2(G,X)$ using Lemma \ref{obs_leaf2}.

\begin{prop}
	\label{prop_leafM}
	For any graph $G=(V,E)$ and any set $X\subseteq V$, $\gamma_P(G;X)=\gamma_P(\ell _2(G,X))$.
\end{prop}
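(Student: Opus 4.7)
}
The plan is to prove the two inequalities $\gamma_P(\ell_2(G,X))\le \gamma_P(G;X)$ and $\gamma_P(G;X)\le \gamma_P(\ell_2(G,X))$ separately, using Lemma \ref{obs_leaf2} as the bridge. Write $L_X$ for the set of all $2|X|$ attached leaves in $\ell_2(G,X)$, so $V(\ell_2(G,X))=V\cup L_X$.

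For the first inequality, I would take a minimum power dominating set $S$ of $G$ subject to $X$ and argue that the same $S$ power dominates $\ell_2(G,X)$. Since $X\subseteq S$, every leaf in $L_X$ lies in $N_{\ell_2(G,X)}[S]$, so the entire set $L_X$ becomes blue in the initial domination step. Now I would run the $G$-forcing process on $S$ and check, by induction on the number of forces, that every force $v\to w$ valid in $G$ is also valid in $\ell_2(G,X)$: the only neighbors of $v$ in $\ell_2(G,X)$ that are not in $G$ are leaves of $v$, and those leaves are already blue, so the number of white neighbors of $v$ is the same in both graphs. Hence $PD_{\ell_2(G,X)}(S)\supseteq V\cup L_X$, giving $\gamma_P(\ell_2(G,X))\le|S|=\gamma_P(G;X)$.

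For the reverse inequality, I start with a minimum power dominating set $T$ of $\ell_2(G,X)$. By Lemma \ref{obs_leaf2}\eqref{cx3}, applied to $\ell_2(G,X)$ (where every $v\in X$ is incident to exactly two leaves), I may assume $X\subseteq T$. I then observe that $T$ may be taken to contain no vertex of $L_X$: if some leaf $\ell\in L_X$ attached to $v\in X$ lies in $T$, then $v\in T$ already dominates $\ell$, and $\ell$ (having degree one) can only ever force $v$, which is already blue, so $T\setminus\{\ell\}$ is still a power dominating set, contradicting minimality. Thus $T\subseteq V$ with $X\subseteq T$. It remains to show $T$ is a power dominating set of $G$, which I would do by the same step-by-step simulation as before, but run in reverse: in $\ell_2(G,X)$, the leaves in $L_X$ are blue immediately, never force anything (their unique neighbor is in $X\subseteq T$), and so every force performed in the $\ell_2(G,X)$-process has both endpoints in $V$; by induction the set of blue vertices in $V$ agrees in $G$ and in $\ell_2(G,X)$ at every step, and the same force is valid in $G$. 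Since $PD_{\ell_2(G,X)}(T)=V\cup L_X$, we conclude $PD_G(T)=V$, yielding $\gamma_P(G;X)\le|T|=\gamma_P(\ell_2(G,X))$.

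The main obstacle is the bookkeeping in this simulation, specifically verifying that adding the leaves does not enable extra forcing moves in $\ell_2(G,X)$ that cannot be mimicked in $G$, and symmetrically that the leaves do not obstruct a force in $\ell_2(G,X)$ that would succeed in $G$. Both issues are resolved by the single observation that, because $X\subseteq T$ (or $X\subseteq S$), all leaves in $L_X$ are blue after the initial domination step and therefore act as ``inert'' blue neighbors that neither block nor enable any further forces involving vertices of $V$.
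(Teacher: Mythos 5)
Your proof is correct and takes essentially the same route as the paper's: the forward inequality by noting that a power dominating set of $G$ containing $X$ already power dominates $\ell_2(G,X)$, and the reverse by invoking Lemma \ref{obs_leaf2} to obtain a minimum power dominating set of $\ell_2(G,X)$ containing $X$, with you merely spelling out the leaf-removal and forcing-simulation details that the paper compresses into the word ``Clearly.'' One tiny caveat: your parenthetical claim that every $v\in X$ has \emph{exactly} two leaf neighbors in $\ell_2(G,X)$ can fail when $v$ already has leaf neighbors in $G$, but in that case part (1) of Lemma \ref{obs_leaf2} puts $v$ in every minimum power dominating set, so your conclusion $X\subseteq T$ still holds.
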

\bpf
First note that any power dominating set of $G$ that contains $X$ is also a power dominating set of $\ell _2(G,X)$. Thus, $\gamma_P(\ell _2(G,X)) \leq \gamma_P(G;X)$. 

By Lemma \ref{obs_leaf2}, there exists a minimum power dominating set  $S$ of $\ell _2(G,X)$ that contains all vertices in $X$.  Clearly   $S$ is a  power dominating set of $G$ containing $X$, and this implies $\gamma_P(G;X) \leq \gamma_P(\ell _2(G,X))$. 
\epf

Since the motivation for studying the power domination number subject to a given set stems from the problem of optimizing PMU placement when the electric grid is expanded, we %
sometimes assume %
 that all graphs are connected and have at least three vertices  %
  (however, subgraphs of a given graph are allowed to be disconnected). This can make %
   the statements of some of  the following results cleaner, and they are easy to adapt to disconnected graphs that may have small components: Since $K_1$ and $K_2$ each have a power domination number of one, 
\[
\pd(G;X) = \sum_{\{i: |C_i|\geq3\}} \pd({C_i}; X\cap C_i) + \sum_{\{i:|C_i| \leq2\}} \max\{|X\cap C_i|, 1\}
\]
where  $C_1, \dots, C_r$ are the connected components of $G$.  

\begin{cor}
	\label{cor_leafboundM}
	Let $G$ be a connected graph of order $n\geq3$ and  $X \subseteq V(G)$. 
	Then $\gamma_P(G;X) \leq \left \lfloor {{n+2|X|} \over 3} \right \rfloor$ and this bound is tight.
\end{cor}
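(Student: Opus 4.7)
The plan is to deduce this inequality immediately from the two previously established results: Proposition \ref{prop_leafM}, which rewrites $\gamma_P(G;X)$ as the ordinary power domination number of the auxiliary graph $\ell_2(G,X)$, and Theorem \ref{thm_n3_bound}, which bounds that ordinary number by one third of the number of vertices. Applying Proposition \ref{prop_leafM} first gives $\gamma_P(G;X)=\gamma_P(\ell_2(G,X))$. By Definition \ref{def_leaf}, $\ell_2(G,X)$ is obtained from $G$ by attaching two pendant leaves at each vertex of $X$, so it is connected (because $G$ is connected and the new leaves are attached to vertices of $G$) and its order is exactly $n+2|X|\geq n\geq 3$. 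Theorem \ref{thm_n3_bound} then gives $\gamma_P(\ell_2(G,X))\leq (n+2|X|)/3$, and since the left-hand side is an integer the floor may be inserted on the right, producing the stated bound.

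For tightness, the triangle $G=K_3$ works for every $X\subseteq V(K_3)$: any nonempty subset of $V(K_3)$ is a power dominating set, so $\gamma_P(K_3;X)=\max\{1,|X|\}$, which equals $\lfloor(3+2|X|)/3\rfloor$ for each of $|X|\in\{0,1,2,3\}$. For larger orders, tight examples can be constructed by starting with any graph $H$ known to achieve equality in Theorem \ref{thm_n3_bound} (so $\gamma_P(H)=\lfloor |V(H)|/3\rfloor$) and, if a nonempty $X$ is desired, realizing $H$ as $\ell_2(G,X)$ for a suitable subgraph $G$ and vertex set $X\subseteq V(G)$; by Proposition \ref{prop_leafM}, the pair $(G,X)$ then attains equality in the restricted bound. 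In particular, any tight example for Theorem \ref{thm_n3_bound} gives a tight example here with $X=\emptyset$.

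I do not foresee any real obstacle; the proof is essentially a two-line calculation using the tools already developed in this subsection. The only point worth a brief check is that the hypotheses of Theorem \ref{thm_n3_bound} continue to hold after the leaf-addition construction, which is transparent from Definition \ref{def_leaf}.
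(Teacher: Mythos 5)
Your proposal is correct and follows essentially the same route as the paper: apply Proposition \ref{prop_leafM} to replace $\gamma_P(G;X)$ by $\gamma_P(\ell_2(G,X))$ and then invoke Theorem \ref{thm_n3_bound} on the connected graph of order $n+2|X|\geq 3$. The only difference is the tightness witness: the paper gives the explicit construction $G'=\ell_2(G,V\setminus X)$, which attains equality for \emph{every} choice of $G$ and $X$, whereas your $K_3$ example (which checks out for all $|X|\in\{0,1,2,3\}$) together with your reverse-engineering recipe is a valid, if less explicit, verification of the same claim.
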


\bpf
By Proposition \ref{prop_leafM}, $\gamma_P(G;X)=\gamma_P(\ell _2(G,X))$. Since $|V(\ell _2(G,X))|=n+2|X|\geq3$, 
it follows from Theorem \ref{thm_n3_bound}  that $\gamma_P(\ell _2(G,X)) \leq \left \lfloor{{n+2|X|} \over 3} \right \rfloor$.

To see that this bound is tight, let $G=(V,E)$ and $X\subseteq V$ and construct $G' = (V',E')$ by attaching two leaves to each vertex in $V\backslash X$, that is, let $G' = \ell_2(G, V\backslash X)$. Then $\pd(G';X) = |V| = \left\lfloor\frac{|V'| + 2|X|}{3}\right\rfloor$.
\epf

  As noted in Remark \ref{rem:rd},  
$\pd (G;X)\leq r_{k}(G,\gamma)$ for $X\subseteq V(G)$ and $k=|X|$.  
Let $G$ be a connected graph of order $n$ and minimum degree at least $2$.
It was shown in {\cite{H02}} that    $r_{k}(G,\gamma)\leq {{2n+3k}\over 5}$  for every integer $k$ such that $1\leq k\leq n$.
As a consequence, for every nonempty set $X\subseteq V(G)$, $\gamma_P(G;X) \leq {{2n+3|X|}\over 5}.$
However,  Corollary \ref{cor_leafboundM} gives a better bound.

Lemma \ref{obs_leaf2} 
ensures that $X$ is contained in \emph{some} but not \emph{every} minimum power dominating set of $\ell _2(G,X)$.  For example, in  Figure \ref{leaf.in.minPDS} let  $G$ be the path with vertex set $\{3,4,5\}$, let $X=\{3,5\}$, and note that $\{1,5\}$ is a minimum power dominating set of $\ell_2(G,X)$ that does not contain $X$. 

\begin{figure}[ht!]
    \centering
    \begin{tikzpicture}{
    \begin{scope}
    \node[label={180:$1$}] (A) at (-2.5,-1) {};
    \node[label={180:$3$}] (B) at (-1.5,0) {};
    \node[label={180:$2$}] (C) at (-2.5,1) {};
    \node[label={90:$4$}] (D) at (0,0) {};
    \node[label={0:$7$}] (E) at (2.5,-1) {};
    \node[label={0:$5$}] (F) at (1.5,0) {};
    \node[label={0:$6$}] (G) at (2.5,1) {};
    \draw (A) -- (B) -- (C);\draw (B)--(D)--(F);\draw (E)--(F)--(G);
    \end{scope}}
    \end{tikzpicture}		\vspace{-1mm}
    \caption{A graph used in several examples.\label{leaf.in.minPDS}}
\end{figure}
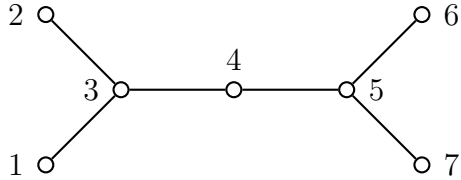

The next result shows that $X$ is contained in every minimum power dominating set of $\ell _3(G,X)$.   The proof is analogous to the proof of  Proposition \ref{prop_leafM} and is omitted.

\begin{prop}
\label{prop_leafB}
For any graph $G=(V,E)$ and any set $X\subseteq V$, a set $S$ is a minimum power dominating set of $G$ subject to $X$ if and only if $S$ is a minimum power dominating set of $\ell _3(G,X)$.
\end{prop}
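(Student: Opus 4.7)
The plan is to mirror the proof of Proposition \ref{prop_leafM} but to exploit the stronger conclusion of Lemma \ref{obs_leaf2}(1) --- which applies to vertices incident to three or more leaves --- so as to force $X$ into \emph{every} minimum power dominating set of $\ell_3(G,X)$ rather than merely some minimum power dominating set. Once this is known, the equivalence between minimum power dominating sets of $G$ subject to $X$ and minimum power dominating sets of $\ell_3(G,X)$ reduces to checking that the extra leaves do not distort propagation on $V(G)$.

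The first step is to show that every minimum power dominating set $S$ of $\ell_3(G,X)$ satisfies $X \subseteq S \subseteq V(G)$. The containment $X \subseteq S$ is immediate from Lemma \ref{obs_leaf2}(1), since each $v_i \in X$ is incident to three leaves $\ell_i^1, \ell_i^2, \ell_i^3$ in $\ell_3(G,X)$. To rule out added leaves from $S$, note that any $\ell_i^j \in S$ has its unique neighbor $v_i \in X \subseteq S$, so $\ell_i^j$ is already dominated and $S \setminus \{\ell_i^j\}$ remains a power dominating set --- contradicting minimality.

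Next I would prove the propagation equivalence: for any set $S$ with $X \subseteq S \subseteq V(G)$, $S$ is a power dominating set of $G$ if and only if $S$ is a power dominating set of $\ell_3(G,X)$. Because $X \subseteq S$, every added leaf lies in $N[S]$ and is in $PD(S)$ at the end of the initial domination step. Throughout the remaining forcing phase, each added leaf is blue with its only neighbor $v_i \in X$; it is never the unique white neighbor of any vertex, and it contributes no white neighbor to $v_i$. Hence the sequences of forces performed on $V(G)$ in the two graphs coincide, and the same vertices of $V(G)$ end up in $PD(S)$.

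Putting the two steps together, the minimum power dominating sets of $\ell_3(G,X)$ are exactly the minimum-cardinality members of the family of power dominating sets of $G$ containing $X$, which by definition realize $\pd(G;X)$. The only real obstacle is to pin down the propagation equivalence precisely enough to confirm that the added leaves neither enable a new force on a vertex of $X$ (they cannot, being blue from the start) nor obstruct any existing force (likewise); after that, the rest is bookkeeping.
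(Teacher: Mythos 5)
Your proposal is correct and is essentially the proof the paper has in mind: the paper omits the argument as ``analogous to the proof of Proposition~\ref{prop_leafM},'' and your write-up is exactly that analogy carried out, using Lemma~\ref{obs_leaf2}(1) to force $X$ into \emph{every} minimum power dominating set of $\ell_3(G,X)$, discarding appended leaves by minimality, and checking that propagation on $V(G)$ is unaffected. Your explicit verification that leaves in $S$ can be deleted (so $S\subseteq V(G)$) and that the set-level biconditional follows is a welcome filling-in of details the paper leaves implicit.
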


\begin{lem}\label{leafnoforce} Let $G$ be any graph and let $u$ be  a leaf vertex of $G$. Then the following are equivalent:
\ben
\item $\gamma_P(G;\{u\})=\gamma_P(G-u)+1$.
\item There is some  minimum power dominating set of $G$ subject to $\{u\}$ in which $u$ does not need to perform a dominating step or a force.
\item There is some  minimum power dominating set of $G$ subject to $\{u\}$ of the form   $S\,\cup \{u\}$ where $S$ is a minimum power dominating set of $G-u$. 
\een
Furthermore, $\gamma_P(G;\{u\})=\gamma_P(G-u)$ if and only if $u$ needs to perform a dominating step or a force for any  minimum power dominating set of $G$ subject to $\{u\}$.  \end{lem}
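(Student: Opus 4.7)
The plan is to prove the cyclic chain $(1)\Rightarrow(3)\Rightarrow(2)\Rightarrow(1)$ and to derive the ``furthermore'' clause from the dichotomy $\gamma_P(G;\{u\})\in\{\gamma_P(G-u),\gamma_P(G-u)+1\}$. The crucial structural observation is that since $u$ is a leaf with unique neighbor $v$, $u$ can never act as the forcer in any power domination process starting from a set containing $u$: the initial domination step already blues $v$ (as $v\in N[u]$), so $u$'s only neighbor is blue from step~$0$ onward and $u$ has no white neighbor at any step. Thus the only way $u$ can contribute to the process is by dominating $v$ initially.

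I will establish two auxiliary claims that together yield the dichotomy. \emph{Claim (i):} if $S$ is a power dominating set of $G-u$, then $S\cup\{u\}$ is a power dominating set of $G$. The proof is a step-by-step coupling between the process on $G$ from $S\cup\{u\}$ and the process on $G-u$ from $S$: since $u$ is always blue in the $G$-process and is a leaf, the only vertex whose white-neighbor count could differ across the two graphs is $v$, and those counts agree (the extra neighbor $u$ of $v$ in $G$ is blue). \emph{Claim (ii):} if $T$ is a minimum power dominating set of $G$ subject to $\{u\}$, then $T^{*}:=(T\setminus\{u\})\cup\{v\}$ satisfies $|T^{*}|\le|T|$, is a power dominating set of $G$ by monotonicity of the closed-neighborhood and propagation operators (one checks $N_G[T^{*}]\supseteq N_G[T]$), and since $v\in T^{*}$ ensures $u$ is blue from step~$0$ in the $G$-process, a parallel coupling shows $T^{*}$ is a power dominating set of $G-u$. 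These give $\gamma_P(G;\{u\})\le\gamma_P(G-u)+1$ and $\gamma_P(G-u)\le\gamma_P(G;\{u\})$, respectively.

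The implication $(3)\Rightarrow(1)$ is a one-line cardinality count. For $(1)\Rightarrow(3)$, I take any minimum power dominating set $S$ of $G-u$; by Claim~(i), $S\cup\{u\}$ is a power dominating set of $G$ of cardinality $\gamma_P(G-u)+1=\gamma_P(G;\{u\})$, hence minimum. I will read~(2) as the formal statement that $T\setminus\{u\}$ is a power dominating set of $G-u$; this captures the phrase ``$u$ does not need to perform a dominating step or a force'' because $u$'s only conceivable contributions are its initial domination of $\{u,v\}$ (redundant exactly when $T\setminus\{u\}$ already power dominates $G-u$, since then one executes the $G$-process by coupling with the $(G-u)$-process from $T\setminus\{u\}$) and forcing, which is impossible for a leaf as noted. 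Under this reading, $(3)\Rightarrow(2)$ is immediate, and $(2)\Rightarrow(3)$ follows from the sandwich $|T\setminus\{u\}|\ge\gamma_P(G-u)\ge\gamma_P(G;\{u\})-1=|T\setminus\{u\}|$ (the middle inequality being Claim~(i)), which forces $T\setminus\{u\}$ to be a \emph{minimum} power dominating set of $G-u$. The ``furthermore'' clause is then the contrapositive of $(1)\Leftrightarrow(2)$ inside the dichotomy.

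The main obstacle I foresee is pinning down the colloquial phrase in~(2) to the formal condition ``$T\setminus\{u\}$ is a PDS of $G-u$''; the leaf structure is exactly what makes this work, since $u$ can never force and its single possible contribution (the initial domination of $\{u,v\}$) is precisely what is rendered redundant by that condition. After this semantic step is settled, everything else reduces to the two couplings of Claims~(i)--(ii) and the cardinality bookkeeping above.
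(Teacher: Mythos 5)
Your proof is correct, and its overall skeleton matches the paper's: both establish the dichotomy $\gamma_P(G-u)\le\gamma_P(G;\{u\})\le\gamma_P(G-u)+1$ and then prove a cycle of implications, deriving the ``furthermore'' clause from $(1)\Leftrightarrow(2)$ under that dichotomy. The genuine difference lies in how the hard implication \emph{out of} statement (1) is handled. The paper proves $(1)\Rightarrow(2)$ via an edge-deletion comparison: it removes the edge $uw$ so that $u$ becomes isolated, chains $\gamma_P(G-u)+1=\gamma_P(G-uw)=\gamma_P(G-uw;\{u\})=\gamma_P(G;\{u\})$, and concludes that the edge at $u$ is irrelevant, so $u$ need not force. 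You instead orient the cycle so that the only implication out of (1) is $(1)\Rightarrow(3)$, which your Claim~(i) makes immediate: adjoin $u$ to a minimum power dominating set $S$ of $G-u$ to get a power dominating set of $G$ subject to $\{u\}$ of cardinality $\gamma_P(G-u)+1=\gamma_P(G;\{u\})$, hence minimum and of the required form. This bypasses the paper's somewhat terse edge-deletion step entirely. You also make explicit two things the paper asserts without proof: the coupling arguments behind $\gamma_P(G;\{u\})\le\gamma_P(G-u)+1$ and $\gamma_P(G-u)\le\gamma_P(G;\{u\})$ (the latter via the swap $u\mapsto v$ in your Claim~(ii), where the paper just states the analogous inequalities for $\pd(G)$ and $\pd(G-u)$), and the formalization of statement (2) as ``$T\setminus\{u\}$ is a power dominating set of $G-u$,'' justified by your observation that a leaf contained in the initial set can never perform a force since its unique neighbor is blue from the domination step onward (the paper instead notes only that a dominating step and a force coincide for a leaf). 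One further point in your favor: your sandwich argument for $(2)\Rightarrow(3)$ uses the correct inequality $\gamma_P(G;\{u\})-1\le\gamma_P(G-u)$, whereas the paper at that step cites the weaker $\pd(G)-1\le\pd(G-u)$, which does not by itself give minimality of $S'\setminus\{u\}$ (though the needed inequality does appear in the paper's opening line). In short: same architecture, but your orientation of the cycle trades the paper's structural edge-deletion trick for a more elementary direct construction, at the cost of a bit more explicit bookkeeping.
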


\bpf   Since $u$ is a leaf,  a dominating step is the same as  a force for $u$. Furthermore, $\pd(G-u)\le\pd(G)\le\pd(G-u)+1$ and $\pd(G)\le \gamma_P(G;\{u\})\le \gamma_P(G-u)+1$.    
We show that $ (2) \Rightarrow (3) \Rightarrow (1) \Rightarrow (2) $.  Then the last statement follows from $(1) \Leftrightarrow (2)$.    

 If there is some  minimum power dominating set $S'$ of $G$ subject to $\{u\}$ in which $u$ does not need to perform a dominating step or a force, then $S=S'\setminus\{u\}$ is a power dominating set of $G-u$, and $S$ is minimum since $\pd(G)-1\le\pd(G-u)$.  

If there is some  minimum power dominating set of $G$ subject to $\{u\}$ of the form   $S\,\cup \{u\}$ where $S$ is a minimum power dominating set of $G-u$, then $\gamma_P(G;\{u\})=\gamma_P(G-u)+1$.

Now suppose  that $\gamma_P(G;\{u\})=\gamma_P(G-u)+1$. Let $w$ be the neighbor of $u$ in $G$. Then   $\gamma_P(G-u)+1=\gamma_P(G-uw)=\gamma_P(G-uw;\{u\})$.  Since $\gamma_P(G-uw;\{u\})=\gamma_P(G;\{u\})$,  the presence of the edge incident to $u$ does not affect the restricted power domination number, so $u$ does not need to perform a force in some minimum power dominating set $S'$ of $G$ subject to $\{u\}$. 
\epf

\subsection{Bounds for restricted power domination}\label{ss:boundPD}

If $W$ is a subset of the vertices of a graph $H$, the subgraph of $H$ induced by $W$ is denoted by $H[W]$.

\begin{prop}\label{propindsubg}
Let  $G'=(V',E')$ be a graph, $V\subset V'$, $G=G'[V]$, and $S$ be a power dominating set of $G$. Let $t$ be the number of isolated vertices of $G'[V'\backslash V]$.
   Then,
\[
\gamma_P(G';S)\leq |S|+\frac{|V'\backslash V|}{2}+\frac{t}{2}
\]
and this bound is tight.
\end{prop}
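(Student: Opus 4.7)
My plan is to augment $S$ with carefully chosen vertices from $W:=V'\setminus V$ so that after the initial domination step in $G'$, all of $W$ is colored; then the remaining forcing in $G'$ can simply mimic a power-domination execution of $S$ in $G$.

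Concretely, let $W_1$ denote the set of isolated vertices of $G'[W]$, so $|W_1|=t$, and set $W_2:=W\setminus W_1$. Every vertex of $W_2$ has a neighbor in $W$, and that neighbor cannot lie in $W_1$, so it must lie in $W_2$; hence $G'[W_2]$ has no isolated vertices. Ore's theorem (Theorem \ref{thm_n2_bound}) then supplies a dominating set $D$ of $G'[W_2]$ with $|D|\le |W_2|/2=(|W|-t)/2$. I define
\[
S'\;:=\;S\cup W_1\cup D,\qquad |S'|\;\le\;|S|+t+\tfrac{|W|-t}{2}\;=\;|S|+\tfrac{|V'\setminus V|}{2}+\tfrac{t}{2}.
\]

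Next I would verify that $S'$ is a power dominating set of $G'$ containing $S$. After the closed-neighborhood step applied to $S'$ in $G'$, the set $W_1\subseteq S'$ is colored, $W_2$ is colored because $D$ dominates it in $G'$, and $N_G[S]\subseteq N_{G'}[S']$ is colored. Now I replay a forcing sequence witnessing that $S$ power dominates $G$: at every stage, each blue $u\in V$ has exactly the same white neighbors in $G'$ as in $G$, because the only extra $G'$-neighbors of $u$ lie in $W$ and $W$ is already entirely blue. Thus every force valid in $G$ remains valid in $G'$, so the closure in $G'$ includes $V$, and combined with $W$ this is all of $V'$.

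For tightness, I would take $G$ to be a single vertex $v$ with $S=\{v\}$ and build $G'$ by adjoining $k$ pairwise-disjoint copies of $K_2$ together with $t$ isolated vertices disjoint from $G$. Each $K_2$ forces one of its two endpoints into any power dominating set, each added isolated vertex must itself appear, and $v\in S$, yielding $\gamma_P(G';S)=1+k+t$, which matches $|S|+\tfrac{2k+t}{2}+\tfrac{t}{2}=1+k+t$.

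The main obstacle is the replay step: a priori, new $G'$-edges between $V$ and $W$ could spoil the unique-white-neighbor condition required for a force in $G'$. Choosing the augmentation so that $W$ is completely colored at the end of the very first domination step is exactly what neutralizes this obstruction, and getting the counts right forces the split into $W_1$ (which must be included wholesale) and $W_2$ (where Ore's theorem is available).
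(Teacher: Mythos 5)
Your proof of the inequality is essentially the paper's own argument: split $V'\setminus V$ into the $t$ isolated vertices of $G'[V'\setminus V]$ (taken wholesale) and the non-isolated remainder (dominated via Ore's theorem by at most $\frac{|V'\setminus V|-t}{2}$ vertices), color all of $V'\setminus V$ in the first domination step, and then let $S$ power dominate $G$ unimpeded; your explicit ``replay'' verification that forces valid in $G$ remain valid in $G'$ once $W$ is blue is a slightly more careful write-up of what the paper states in one line. Where you genuinely diverge is the tightness example. The paper uses a connected witness, $G=P_n\,\Box\,P_2$ with $S=\{(1,1)\}$ and two triangles ($K_2$'s joined to $(1,2)$ and $(n,2)$), which has $t=0$; your witness is a disjoint union of a single vertex, $k$ copies of $K_2$, and $t$ isolated vertices. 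Yours is valid under the proposition's hypotheses (nothing requires $G'$ connected, and the component-wise count $\gamma_P(G';S)=1+k+t$ matches the bound exactly), and it has the advantage of showing the bound is tight even when $t>0$, which the paper's example does not address; the paper's example, in turn, shows tightness is not an artifact of disconnectedness. One small point worth a sentence in a final write-up: Ore's theorem as stated needs $|W_2|\ge 2$, so you should note that $W_2=\emptyset$ is handled by $D=\emptyset$ (and $|W_2|=1$ cannot occur since $G'[W_2]$ has no isolated vertices) --- the same implicit edge case the paper also glosses over.
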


\bpf
Let $X_1$ be the set of isolated vertices of $G'[V'\backslash V]$. By Theorem \ref{thm_n2_bound}, the $|V'\backslash V|-t$ non-isolated vertices of $G'[V'\backslash V]$ can be dominated by a set $X_2$ of size at most $\frac{|V'\backslash V|-t}{2}$. Thus, the set $X_1\cup X_2$ can dominate all vertices in $G'[V'\backslash V]$ in the first time step, and then the rest of $G'$ can be power %
dominated by $S$. Thus, $\gamma_P(G';S)\leq |S|+\frac{|V'\backslash V|-t}{2}+t=|S|+\frac{|V'\backslash V|}{2}+\frac{t}{2}$.

To see that the bound is tight, consider $G=P_n \Box P_2$ with $n>3$, denote the vertices of $G$ by $(i,j)$ for $1 \leq i \leq n$ and $j=1,2$, and let $S=\{(1,1)\}$. Construct $G'$   from $G$ by adding one $K_2$ with both vertices adjacent to $(1,2)$ and another $K_2$ with both vertices adjacent to $(n,2)$  (see Figure \ref{fig:Prop3.8} for an example). Since for both $\ell=1$ and $\ell=n$, one of $(\ell,2)$ or a vertex from $N_{G'}[(\ell,2)]\setminus V(G)$ must be in any power dominating set of $G'$,  $\pd(G';S)=3 = 1 + \frac{4}{2} + \frac{0}{2}$.
\epf

\begin{figure}[ht!]
    \centering
    \begin{tikzpicture}{
    \begin{scope}
    \node (1A) at (-5.329,-.484) {};
    \node (2A) at (-5.224,0.707) {};
    \node[label={[label distance=-2pt]87:$(1,2)$}] (3A) at (-4,0) {};
    \node (4A) at (-2,0) {};
    \node (5A) at (0,0) {};
    \node (6A) at (2,0) {};
    \node[label={[label distance=-2pt]93:$(5,2)$}] (7A) at (4,0) {};
    \node (8A) at (5.329,-.484) {};
    \node (9A) at (5.224,.707) {};
    \node[label={[label distance=-2pt]235:$(1,1)$}] (3B) at (-4,-1.25) {};
    \node (4B) at (-2,-1.25) {};
    \node (5B) at (0,-1.25) {};
    \node (6B) at (2,-1.25) {};
    \node[label={[label distance=-2pt]315:$(5,1)$}] (7B) at (4,-1.25) {};
    \draw (1A)--(2A)--(3A)--(1A);
    \draw (3A)--(4A)--(5A)--(6A)--(7A)--(7B)--(6B)--(5B)--(4B)--(3B)--(3A);
    \draw (4A)--(4B);\draw (5A)--(5B);\draw (6A)--(6B);
    \draw (7A)--(8A)--(9A)--(7A);
    \end{scope}}
    \end{tikzpicture}		\vspace{-5mm}
\caption{Graph $G'$ for Proposition \ref{propindsubg}.\label{fig:Prop3.8}}
\end{figure}
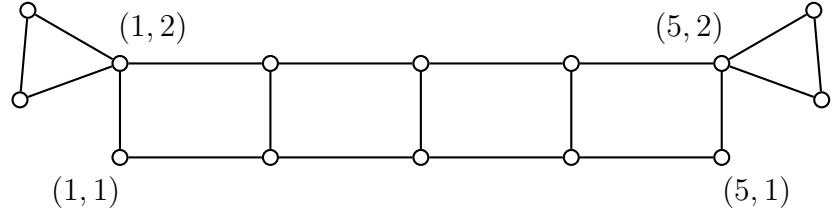

The next  proposition is a basic extension result. 

\begin{prop}\label{newlemLH}  Let  $G'=(V',E')$ be a graph, $V\subset V'$, $G=G'[V]$,   $H_1,\ldots,H_k$ be the components of $G'[V'\backslash V]$, and $S$ be a power dominating set of $G$.  For $1\leq i\leq k$, let $N_i=V(H_i)\cap N_{G'}[V\backslash N_G[S]]$. Then \[\gamma_P(G';S) \leq \sum_{i=1}^k\gamma_P(H_i;N_i)+|S|\]
 and this bound is tight.
\end{prop}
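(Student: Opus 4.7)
The plan is to construct an explicit power dominating set of $G'$ containing $S$ of the claimed size. For each $i$, let $T_i$ be a minimum power dominating set of $H_i$ subject to $N_i$, so $|T_i|=\gamma_P(H_i;N_i)$, and set $\hat{S}:=S\cup\bigcup_{i=1}^k T_i$. Since $S\subseteq V$ while each $T_i\subseteq V(H_i)\subseteq V'\setminus V$, these sets are pairwise disjoint and $|\hat{S}|=|S|+\sum_i\gamma_P(H_i;N_i)$, so it suffices to show that $\hat{S}$ is a power dominating set of $G'$.

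Let $B^*$ denote the set of vertices eventually observed by the power domination process in $G'$ starting from $\hat{S}$, and suppose toward a contradiction that $U:=V'\setminus B^*\neq\emptyset$. I would first establish that $U\cap V(H_j)=\emptyset$ for every $j$. Fix a chronological list of forces for the power domination process of $H_j$ from $T_j$, and let $w_j$ be the first vertex of $U\cap V(H_j)$ that this process colors, via some force $x\to w_j$. By the minimality of $w_j$, the forcer $x$ and every vertex of $N_{H_j}(x)\setminus\{w_j\}$ lies either in $N_{H_j}[T_j]\subseteq N_{G'}[\hat{S}]$ or in $V(H_j)\setminus U$, and so belongs to $B^*$.

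The key dichotomy is on whether $x\in N_j$. If $x\in N_j$, then $x\in T_j\subseteq\hat{S}$, so $w_j\in N_{G'}[\hat{S}]\subseteq B^*$, contradicting $w_j\in U$. If $x\notin N_j$, then by the definition of $N_j$ every $V$-neighbor of $x$ lies in $N_G[S]\subseteq B^*$; and since $H_j$ is a component of $G'[V'\setminus V]$, $x$ has no neighbors in $V(H_i)$ for $i\neq j$, so these $V$-neighbors together with $N_{H_j}(x)\setminus\{w_j\}$ exhaust $N_{G'}(x)\setminus\{w_j\}$. Hence $N_{G'}(x)\setminus\{w_j\}\subseteq B^*$, and $x$ could force $w_j$ in $G'$, contradicting the closedness of $B^*$. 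Thus $V(H_j)\subseteq B^*$ for each $j$.

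Second, I would show $U\cap V=\emptyset$ by the analogous first-colored argument applied to the power domination process of $G$ from $S$: if $v\in U\cap V$ is first colored by a force $u\to v$, then minimality gives $N_G(u)\setminus\{v\}\subseteq B^*$, while the previous step gives $N_{G'}(u)\cap(V'\setminus V)\subseteq\bigcup_jV(H_j)\subseteq B^*$, so $u$ would force $v$ in $G'$, a contradiction. Hence $B^*=V'$ and $\hat{S}$ is a power dominating set of $G'$. For tightness, one can take $G'$ to consist of a single vertex $v$ joined by an edge to the center of each of two disjoint copies of $K_{1,3}$, with $V=\{v\}$ and $S=\{v\}$: here each $N_i=\emptyset$, the bound gives $|S|+2\gamma_P(K_{1,3})=3$, and a direct check shows that no power dominating set of $G'$ containing $v$ has fewer than three vertices. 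The main obstacle in the argument is the apparent circular dependence between forcing inside $V\setminus N_G[S]$ and forcing inside the $H_j$'s; what resolves it cleanly is the observation that the constraint $N_j\subseteq T_j$ pushes every potentially "problematic" intermediate vertex directly into $\hat{S}$, so that its unique uncolored neighbor becomes blue through the initial domination step in $G'$ rather than through any later force.
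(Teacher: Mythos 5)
Your proof is correct and takes essentially the same route as the paper's: both form the set $S\cup\bigcup_{i=1}^k T_i$ with $T_i$ a minimum power dominating set of $H_i$ subject to $N_i$, and both rest on the same dichotomy---a $V$-neighbor $y$ of $H_i$ either lies in $N_G[S]$ and is dominated by $S$, or all of its $H_i$-neighbors lie in $N_i\subseteq T_i$ and it is dominated there---so that the components $H_i$ are observed first and $S$ then power dominates $G$. The only differences are presentational: the paper verifies the set directly by scheduling the domination step and forces rather than via your first-colored-vertex contradiction against the closure, and it exhibits tightness with the graph of Figure \ref{leaf.in.minPDS} instead of your (equally valid) two-copies-of-$K_{1,3}$ example.
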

\bpf  For $i=1,\dots,k$, let  $ S_i$ be any power dominating set of $H_i$ that contains $N_i$.  We show $\cup_{i=1}^k S_i \cup S$ is a power dominating set of $G'$, so  choosing minimum power dominating sets $S_i$ subject to the restrictions establishes the inequality.  Suppose $y\in  N_{G'}[H_i]\cap V$, which implies $y$ has a neighbor $x\in V(H_i)$.   If $y\not\in N_G[S]$, then $x\in N_i$, so $S_i$ dominates $y$.  If $y\in N_G[S]$, then $y$ is dominated by $S$.  
Since  all neighbors of $H_i$ that are not in $H_i$
are dominated, all the vertices of $H_i$ can be power dominated by $S_i$.  Once all vertices not in $G$ are power dominated, $S$ can power dominate $G$.

To see that the bound is tight, let $G'$ be the graph in Figure \ref{leaf.in.minPDS}, $G=G'[\{1,2,3,4\}]$, and $S=\{3\}$.  Then $\pd(G';S)=2=1+1=\pd(G'[\{5,6,7\}];\{5\}) +|S|$. Larger examples may be constructed by subdividing the edge between 3 and 4 repeatedly. \epf

\begin{rem}\label{newlemBB}
The statement of Proposition \ref{newlemLH} remains valid when $N_i$ is re-defined as a subset of $V(H_i)$ which is a minimum dominating set of $V(H_i)\cap (N_{G'}[V]\backslash N_{G'}[S])$. The proof for this modified statement is similar to the proof of Proposition \ref{newlemLH}, except that the order is different: Since all the vertices in $N_{G'}[V]\setminus V$ are dominated, the vertices in $G$ can be power dominated, followed by the vertices of $H_i, i=1,\dots,k$. 
\end{rem}

\begin{prop}\label{prop310}
Let  $G'=(V',E')$ be a graph, $V\subset V'$, $G=G'[V]$,    and $S$ be a power dominating set of $G$. Suppose each component of $G'[V'\backslash V]$ has at least 3 vertices. Then,
\[
\gamma_P(G';S)\leq |S|+\frac{|V'\backslash V|}{3}+|N_{G'}[V\backslash N_G[S]]\cap (V'\backslash V)|
\]
and this bound is tight.
 \end{prop}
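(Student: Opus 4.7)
The plan is to obtain the inequality by chaining together three earlier results: Proposition \ref{newlemLH}, Proposition \ref{trivialbounds}, and Theorem \ref{thm_n3_bound}. Since each component $H_i$ of $G'[V'\setminus V]$ is connected with at least three vertices, the latter two apply directly to every $H_i$, and the combined bound fits together exactly into the quantity on the right-hand side.

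First I would apply Proposition \ref{newlemLH} with $N_i = V(H_i) \cap N_{G'}[V\setminus N_G[S]]$ to write
\[
\gamma_P(G'; S) \leq \sum_{i=1}^{k} \gamma_P(H_i; N_i) + |S|.
\]
For each $i$, the trivial upper bound from Proposition \ref{trivialbounds} gives $\gamma_P(H_i; N_i) \leq \gamma_P(H_i) + |N_i|$, and Theorem \ref{thm_n3_bound} (applicable since $H_i$ is connected with $|V(H_i)| \geq 3$) gives $\gamma_P(H_i) \leq \lfloor |V(H_i)|/3 \rfloor \leq |V(H_i)|/3$. Because the components $H_i$ partition $V'\setminus V$, we have $\sum_i |V(H_i)| = |V'\setminus V|$ and $\sum_i |N_i| = |N_{G'}[V\setminus N_G[S]] \cap (V'\setminus V)|$ (the $V(H_i)$ being pairwise disjoint). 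Substituting these sums yields exactly the claimed inequality.

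For tightness, the cleanest route is to take an example where the last term vanishes, so that all three chained bounds are simultaneously tight. Let $G = K_2$ with $S$ equal to one of its vertices, so $V = N_G[S]$ and the boundary set is empty, and let $G'$ be the disjoint union of $G$ with a graph $H$ that achieves $\gamma_P(H) = |V(H)|/3$ exactly; a standard choice is $H$ consisting of two adjacent vertices $u, v$ each of which has two additional pendant leaves, giving $|V(H)| = 6$ and $\gamma_P(H) = 2$ by Lemma \ref{obs_leaf2}. Then $\gamma_P(G'; S) = |S| + \gamma_P(H) = 1 + 2 = 3$, matching $|S| + |V'\setminus V|/3 + 0$.

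The main obstacle is really just the tightness claim: the proof of the inequality is mechanical chaining, but because three inequalities are combined, a tight witness must be chosen so that each intermediate bound is tight at the same time. Restricting to examples where $V\setminus N_G[S] = \emptyset$ (so the boundary term is zero) sidesteps the delicate interaction between the boundary vertices and the minimum power dominating sets of each $H_i$.
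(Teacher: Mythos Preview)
Your argument for the inequality is exactly the paper's: apply Proposition~\ref{newlemLH}, then bound each $\gamma_P(H_i;N_i)$ via Proposition~\ref{trivialbounds} and Theorem~\ref{thm_n3_bound}, and sum. Your tightness witness differs---the paper uses a connected $G'$ built from $K_n$ with a $P_3$ attached at each vertex, whereas you take a disconnected $G'=K_2\sqcup H$---but your example is valid and arguably simpler; just be aware that if the ambient convention is that $G'$ be connected (the paper remarks it ``sometimes'' assumes this), you would want a connected witness instead.
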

\bpf
  Let $H_1,\ldots,H_k$ be the components of $G'[V'\backslash V]$.  For $1\leq i\leq k$, let $N_i=V(H_i)\cap N_{G'}[V\backslash N_G[S]]$. From Proposition \ref{newlemLH},
$\gamma_P(G';S)\leq  \sum_{i=1}^k\gamma_P(H_i;N_i)+|S|$.
From Proposition \ref{trivialbounds}, $\gamma_P(H_i;N_i)\le \gamma_P(H_i)+|N_i|$ for $i=1,\dots,k$, and $\gamma_P(H_i)\le \frac{|V(H_i)|}{3}$ by Theorem \ref{thm_n3_bound}.  Thus, we conclude $\gamma_P(G';S)\leq \sum_{i=1}^k\frac{|V(H_i)|}{3} +\sum_{i=1}^k |N_i|+|S|$.
Since $\sum_{i=1}^k|V(H_i)|=|V'\backslash V|$ and  $\sum_{i=1}^k|N_i|=N_{G'}[V\backslash N_G[S]]\cap (V'\backslash V)$, we obtain $\gamma_P(G';S)\leq { {|V'\backslash V|}\over 3} +|N_{G'}[V\backslash N_G[S]]\cap(V'\backslash V)|+|S|.$

 To see that the bound is tight, consider $G=K_n$ with vertices $V=\{v_1,\ldots ,v_n\}$. For each $i=1,\ldots ,n$, add three vertices $a_i,b_i$ and $c_i$, and also add the edges $v_ia_i$, $a_ib_i$ and $a_ic_i$. Let $G'$ be the resulting graph so $|V'|=4n$ (see Figure \ref{fig:Prop3.10} for an example). 
  Then, $G=G'[V]$, $S=\{v_1\}$ is a power dominating set of $G$, and the connected components of  $G[V'\setminus V]$ are  the paths $b_i,a_i,c_i$ for $i=1,\ldots, n$. 
It is easily verified that $S\cup \{b_i:i=1,\ldots, n\}$ is a power dominating set and every $S$-restricted power dominating set must contain $S$ and at least one vertex in each component of $V'\setminus V$, so $\pd(G';S)= 1+n$.  Since $N_G[S]=V$, clearly $V\setminus N_G[S]=\emptyset$, so $|N_{G'}[V\setminus N_G[S]]\cap(V'\setminus V)|=0$. Since $|V'\setminus V|=3n$ and $|S|=1$, the upper bound is $|S|+{{|V'\setminus V|}\over 3}+|N_{G'}[V\setminus N_G[S]]\cap(V'\setminus V)|=1+n+0=\pd (G';S)$. 
\epf
 
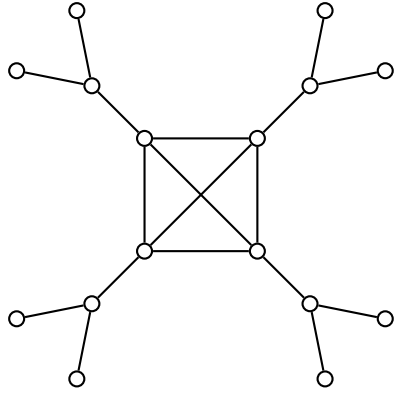
\begin{figure}[ht!]
    \centering
    \begin{tikzpicture}{
    \begin{scope}
	\node (A) at (-.75,-.75) {};
		\node (Aa) at (-1.45,-1.45) {};
		\node (Ab) at (-1.65,-2.45) {};
		\node (Ac) at (-2.45,-1.65) {};
		\draw (Ab)--(Aa)--(Ac);\draw (Aa)--(A);
	\node (B) at (-.75,.75) {};
		\node (Ba) at (-1.45,1.45) {};
		\node (Bb) at (-1.65,2.45) {};
		\node (Bc) at (-2.45,1.65) {};
		\draw (Bb)--(Ba)--(Bc);\draw (Ba)--(B);
	\node (C) at (.75,.75) {};
		\node (Ca) at (1.45,1.45) {};
		\node (Cb) at (1.65,2.45) {};
		\node (Cc) at (2.45,1.65) {};
		\draw (Cb)--(Ca)--(Cc);\draw (Ca)--(C);
	\node (D) at (.75,-.75) {};
		\node (Da) at (1.45,-1.45) {};
		\node (Db) at (1.65,-2.45) {};
		\node (Dc) at (2.45,-1.65) {};
		\draw (Db)--(Da)--(Dc);\draw (Da)--(D);
	\draw (A)--(B)--(C)--(D)--(A);\draw (A)--(C);\draw (B)--(D);
    \end{scope}}
    \end{tikzpicture}
\caption{Graphs $G=K_4$ and  $G'$ for Proposition \ref{prop310}.\label{fig:Prop3.10}}
\end{figure}

  \begin{prop}
\label{lemma_boundary}
Let $G=(V,E)$ be a graph, and $V=V_1\cup V_2$ be a partition of $V$. Let $N_1=N[V_2]\cap V_1$ and $N_2=N[V_1]\cap V_2$, and let $G_1=G[V_1]$ and $G_2=G[V_2]$. Let $W_1\subseteq V_1$ and $W_2\subseteq V_2$ be sets of vertices such that $W_1\cup W_2$ dominates $N_1\cup N_2$ in $G$. Then, 

\[
\gamma_P(G)\leq \gamma_P(G;W_1\cup W_2)\leq \gamma_P(G_1;W_1)+\gamma_P(G_2;W_2)
\]
and these bounds are tight.
\end{prop}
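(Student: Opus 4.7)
The lower bound $\gamma_P(G) \leq \gamma_P(G; W_1 \cup W_2)$ is immediate from Proposition \ref{trivialbounds}. For the upper bound, I would let $S_1$ be a minimum power dominating set of $G_1$ subject to $W_1$, let $S_2$ be a minimum power dominating set of $G_2$ subject to $W_2$, and show that $S := S_1 \cup S_2$ is a power dominating set of $G$ containing $W_1 \cup W_2$. The key observation is that after the initial domination step in $G$ from $S$, the observed set $T_1 := N_G[S]$ contains $N_{G_1}[S_1] \cup N_{G_2}[S_2]$ (trivially, since $G_i$ is an induced subgraph of $G$) as well as $N_1 \cup N_2$ (using the hypothesis that $W_1 \cup W_2 \subseteq S$ dominates $N_1 \cup N_2$ in $G$).

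The main step is to verify that the zero forcing closure of $T_1$ in $G$ is all of $V$. I would prove $V_1 \subseteq \mathrm{cl}_G(T_1)$ by contradiction: assume $U := V_1 \setminus \mathrm{cl}_G(T_1)$ is nonempty and, since $S_1$ power dominates $G_1$, consider the chronological order in which vertices of $V_1$ become observed under the $G_1$-process starting from $S_1$. Let $w$ be the first vertex of $U$ to be observed in this process, and let $u$ be the vertex that forces $w$ (the case $w \in N_{G_1}[S_1]$ is impossible, since $N_{G_1}[S_1] \subseteq T_1$). At the moment $u$ forces $w$ in $G_1$, the vertex $u$ and every $G_1$-neighbor of $u$ other than $w$ are already observed in $G_1$ and hence belong to $V_1 \setminus U \subseteq \mathrm{cl}_G(T_1)$ by the choice of $w$. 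Moreover, every $G$-neighbor of $u$ lying in $V_2$ belongs to $N_2$, and hence to $T_1 \subseteq \mathrm{cl}_G(T_1)$. Therefore in $G$, the vertex $u$ is observed and $w$ is its only possibly white neighbor, so $u$ can force $w$, contradicting $w \notin \mathrm{cl}_G(T_1)$. The symmetric argument gives $V_2 \subseteq \mathrm{cl}_G(T_1)$.

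For tightness of both inequalities simultaneously, I would take $G = P_3 \cup P_3$ partitioned so that $V_1$ and $V_2$ are the two components, and set $W_1 = W_2 = \emptyset$; then $N_1 = N_2 = \emptyset$, the hypothesis holds vacuously, and $\gamma_P(G) = \gamma_P(G; \emptyset) = 2 = \gamma_P(G_1; \emptyset) + \gamma_P(G_2; \emptyset)$. The main obstacle is the forcing simulation in paragraph two: a force available in $G_1$ need not be available in $G$ because $u$ may acquire new white neighbors across the cut into $V_2$. The domination hypothesis on $W_1 \cup W_2$ is used precisely to neutralize those cross-cut neighbors during the initial domination step, after which the $G_1$- and $G_2$-forcing processes can be run independently inside $G$.
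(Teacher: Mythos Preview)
Your proof is correct and follows the same approach as the paper: take minimum restricted power dominating sets $S_1, S_2$ of $G_1, G_2$, observe that $S_1\cup S_2$ dominates $N_1\cup N_2$ in the first step so that each side's forcing process can run independently inside $G$, and conclude; you simply spell out the forcing-simulation step more carefully via a first-vertex contradiction. Your tightness example (the disjoint union $P_3\cup P_3$ with $W_1=W_2=\emptyset$) is simpler than the paper's connected example built from two glued stars, but equally valid since the statement does not require $G$ to be connected.
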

\bpf
Let $S_1$ and $S_2$ be sets that realize $\gamma_P(G_1;W_1)$ and $\gamma_P(G_2;W_2)$, respectively. By construction, $S_1\cup S_2$ contains $W_1\cup W_2$. Moreover, the vertices in $S_1\cup S_2$ can dominate $N_1\cup N_2$ in $G$ in the first time step, and then $S_1$ and  $S_2$, respectively, can force the vertices of $V_1$ and $V_2$ in $G$ independently, since the vertices in $V_1$ (respectively, $V_2$) are not adjacent to any uncolored vertices outside $V_1$ (respectively, $V_2$). Thus, $S_1\cup S_2$ is a power dominating set of $G$, so $\gamma_P(G;W_1\cup W_2)\leq \gamma_P(G_1;W_1)+\gamma_P(G_2;W_2)$. 
 
To see that the bound is tight, let $G_1=K_{1,p}$ and $G_2=K_{1,q}$ with $p,q\geq 4$. Label the centers of the stars $c_1$ and $c_2$, respectively. Let $u_1$ and $v_1$   be two leaves of $G_1$ and $u_2$ and $v_2$ be two leaves of $G_2$. Let $G$ be the graph formed from the disjoint union of $G_1$ and $G_2$ by adding all possible edges between a vertex in $\{u_1,v_1\}$ and a vertex in  $\{u_2,v_2\}$  (see Figure \ref{fig:Prop3.11} for an example). For the obvious partition of the vertices into $V(G_1), V(G_2)$, let $W_1=\{c_1\}$ and $W_2=\{c_2\}$. Then $\gamma_P(G)=2= \gamma_P(G;W_1\cup W_2)= \gamma_P(G[V_1];W_1)+\gamma_P(G[V_2];W_2)$.
\epf\vspace{-10pt}

\begin{figure}[ht!]
    \centering
    \begin{tikzpicture}{
    \begin{scope}
    \node (A) at (-3.75,-1) {};
    \node[label={180:$c_1$}] (B) at (-2.75,0) {};
    \node (C) at (-3.75,1) {};
    \node[label={270:$u_1$}] (D) at (-1,-.75) {};
    \node[label={90:$v_1$}] (E) at (-1,.75) {};
    \node[label={270:$u_2$}] (F) at (1,-.75) {};
    \node[label={90:$v_2$}] (G) at (1,.75) {};
    \node (H) at (3.75,-1) {};
    \node[label={0:$c_2$}] (I) at (2.75,0) {};
    \node (J) at (3.75,1) {};
    \draw (A) -- (B) -- (C);
    \draw (B)--(D)--(F)--(I)--(G)--(E)--(B);\draw (D)--(G);\draw (E)--(F);
    \draw (H)--(I)--(J);
    \end{scope}}
    \end{tikzpicture}		\vspace{-2mm}
\caption{Graph $G$ for Proposition \ref{lemma_boundary}.\label{fig:Prop3.11}}
\end{figure}
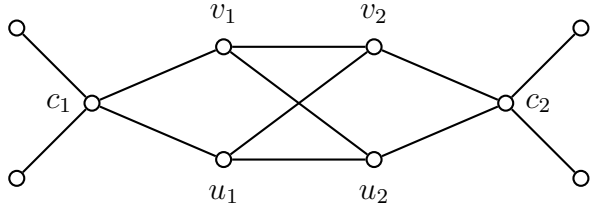

\subsection{Bounds for restricted zero forcing}\label{ss:boundZ}

The next two results are the zero forcing versions of Remark  \ref{newlemBB} and Proposition \ref{lemma_boundary}. 

 \begin{prop}
	\label{Zgeneral_boundary}  Let  $G'=(V',E')$ be a graph, $V\subset V'$, $G=G'[V]$,    $H_1,\ldots,H_k$ be the components of $G'[V'\backslash V]$, and $B$ be a zero forcing set of $G$. For $1 \leq i \leq k$, let $N_i=N_{G'}[V] \cap V(H_i)$. Then,
	\[Z(G';B) \leq \sum_{i=1}^kZ(H_i;N_i)+|B|\]
and this bound is tight.
\end{prop}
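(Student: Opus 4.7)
The plan is to mimic the proof of Proposition \ref{newlemLH}: for each $i$, let $B_i$ be a minimum zero forcing set of $H_i$ subject to $N_i$, so $|B_i| = Z(H_i; N_i)$, and show that $B \cup \bigcup_{i=1}^k B_i$ is a zero forcing set of $G'$ containing $B$. Since $B \subseteq V$ and $B_i \subseteq V(H_i) \subseteq V'\backslash V$ with the $V(H_i)$ pairwise disjoint, this union has cardinality exactly $|B| + \sum_{i=1}^k Z(H_i;N_i)$, which yields the desired bound.

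The main step is to check that, starting from $B \cup \bigcup B_i$ colored blue, the color-change rule eventually colors all of $V'$. First I would show that $B$ can perform the same sequence of forces in $G'$ that it does in $G$: for any $v \in V$, every neighbor of $v$ in $V'\backslash V$ lies in some $V(H_i)$ and is adjacent to $V$, so it belongs to $N_i \subseteq B_i$ and is initially blue. Hence at each step the white neighbors of $v$ in $G'$ coincide with its white neighbors in $G$, so the chronological list of forces associated with $B$ in $G$ transfers intact to $G'$ and colors all of $V$ blue.

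Next, because $H_i$ is a component of $G'[V'\backslash V]$, there are no edges in $G'$ between $V(H_i)$ and $V(H_j)$ for $i\neq j$; the only neighbors in $G'$ of a vertex $u \in V(H_i)$ lying outside $V(H_i)$ are in $V$, which is blue after the previous stage. Thus $u$'s white neighbors in $G'$ equal its white neighbors in $H_i$, and $B_i$'s forces in $H_i$ transfer to $G'$ to color all of $V(H_i)$ blue. Applying this for each $i$ finishes the argument. The main obstacle is exactly this non-interference of forces across the partition, which crucially requires the hypothesis $N_i \subseteq B_i$ so that the extra neighbors of $V$ in each $H_i$ are already blue before $B$ begins forcing.

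For tightness, take $G' = K_3$ with vertices $a,b,c$, let $V = \{a,b\}$ so that $G = K_2$, and let $B = \{a\}$, a minimum zero forcing set of $G$. Then $H_1$ is the single vertex $c$, $N_1 = \{c\}$, and $Z(H_1; N_1) = 1$, so the bound gives $Z(G'; \{a\}) \leq 1 + 1 = 2$. On the other hand, $a$ alone has two white neighbors in $G'$ and cannot force, so $Z(G'; \{a\}) = 2$ and the bound is attained.
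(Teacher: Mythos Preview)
Your proof is correct and follows essentially the same approach as the paper's: choose minimum zero forcing sets $B_i$ of $H_i$ containing $N_i$, argue that $B$ forces all of $V$ first (since every cross-edge neighbor lies in some $N_i$ and is already blue), and then each $B_i$ forces $V(H_i)$. Your tightness example ($G'=K_3$) is different from the paper's but is equally valid and perhaps simpler.
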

\bpf  For $i=1,\dots,k$, let  $ B_i$ be any zero forcing set of $H_i$ that contains $N_i$.  
We show $\cup_{i=1}^k B_i \cup B$ is a zero forcing set of $G'$, so choosing minimum zero forcing sets $B_i$ subject to the restrictions establishes the inequality. Since all vertices in $N_{G'}[V]\backslash V$ are initially colored blue, $B$ can force all vertices of $G$. After this, since all vertices in $N_{G'}[V(H_i)]\backslash V(H_i)$ are colored blue for each $H_i$, $B_i$ will be able to force $V(H_i)$ for $1\leq i\leq k$.

To see that the bound is tight, let $G'$ be the graph in Figure \ref{leaf.in.minPDS}, $G=G'[\{1,2,3\}]$, and $B=\{1\}$.  Then $H_1=\{4,5,6,7\}$, $N_1=\{4\}$, and $Z(G';B)=3=Z(H_1;N_1)+|B|$. 
\epf


\begin{prop}
	\label{lemma_Zboundary}
	Let $G=(V,E)$ be a graph, and $V=V_1\cup V_2$ be a partition of $V$. Let $N_1=N[V_2]\cap V_1$ and $N_2=N[V_1]\cap V_2$, and let $G_1=G[V_1]$ and $G_2=G[V_2]$. Then,
\[
 Z(G) \leq \min(Z(G_1)+Z(G_2;N_2), Z(G_1;N_1)+Z(G_2)),  
\]
and this bound is tight.
\end{prop}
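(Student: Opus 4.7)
The plan is to mirror the proof of Proposition \ref{lemma_boundary}. To establish $Z(G) \le Z(G_1) + Z(G_2;N_2)$, I will pick a minimum zero forcing set $B_1$ of $G_1$ together with a minimum zero forcing set $B_2$ of $G_2$ subject to $N_2$, and argue that $B = B_1 \cup B_2$ is a zero forcing set of $G$. The second inequality $Z(G) \le Z(G_1;N_1) + Z(G_2)$ will then follow immediately by swapping the roles of $V_1$ and $V_2$.

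The key observation is that every edge of $G$ between $V_1$ and $V_2$ has its $V_2$-endpoint in $N_2$, and $N_2 \subseteq B_2$; in particular, at the start every $V_2$-neighbor of every vertex of $V_1$ is already blue. Thus, given a chronological list of forces $\mathcal{F}_1$ witnessing that $B_1$ zero forces $G_1$, each force $u \to w$ of $\mathcal{F}_1$ (with $u,w \in V_1$) remains valid in $G$, since $u$'s only white neighbor in $G_1$ is $w$ and all neighbors of $u$ in $V_2$ are already blue. Applying the forces of $\mathcal{F}_1$ in order colors all of $V_1$. With $V_1$ now entirely blue, a symmetric argument shows that every force of a chronological list $\mathcal{F}_2$ witnessing that $B_2$ zero forces $G_2$ is valid in $G$ as well: any such vertex in $V_2$ has all of its $V_1$-neighbors blue. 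Hence $B_1 \cup B_2$ forces $G$, giving $Z(G) \le |B_1|+|B_2| = Z(G_1) + Z(G_2;N_2)$.

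For tightness I plan to use $G = K_n$ with any nontrivial partition $V = V_1 \cup V_2$. Since $K_n$ is complete, $N_2 = V_2$, and any zero forcing set of $K_{|V_2|}$ containing all of $V_2$ is automatically all of $V_2$, so $Z(G_2;N_2) = |V_2|$ while $Z(G_1) = |V_1|-1$; the sum is $n-1 = Z(K_n)$. I do not anticipate a serious obstacle here: the only subtlety is verifying that forces performed inside one block are never obstructed by a white vertex across the cut, and the containment $N_2 \subseteq B_2$ (resp.\ $N_1 \subseteq B_1$) is precisely what is needed to guarantee this, allowing the simple sequential ordering ``do all of $\mathcal{F}_1$, then all of $\mathcal{F}_2$'' to succeed.
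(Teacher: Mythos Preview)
Your argument for the inequality is essentially identical to the paper's: choose $B_1$ a minimum zero forcing set of $G_1$ and $B_2$ a minimum zero forcing set of $G_2$ containing $N_2$, observe that every cross-edge has its $V_2$-endpoint already blue, force $G_1$ first, then $G_2$. Your tightness example ($K_n$ with a nontrivial bipartition, giving $Z(G_1)+Z(G_2;N_2)=(|V_1|-1)+|V_2|=n-1=Z(K_n)$) differs from the paper's specific eight-vertex graph but is equally valid and arguably cleaner; just be careful that $Z(K_a)=a-1$ requires $a\ge 2$, so you should take $|V_1|\ge 2$ rather than ``any'' nontrivial partition.
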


\bpf
Without loss of generality, assume $Z(G_1)+Z(G_2;N_2)\le  Z(G_1;N_1)+Z(G_2)$. Start with an initial set $B=B_1 \cup B_2$ where $B_1$ and $B_2$ are minimum zero forcing sets of $G_1$ and $G_2$, respectively, and $N_2\subseteq B_2$. Given a vertex $v$ in $V_1$, its only white neighbors belong to $G_1$, hence $B_1$ can force all of $G_1$. Once $G_1$ is colored blue, $B_2$ is a zero forcing set of $G_2$ and will force the rest of the graph. 
To see that this bound is tight, consider the graph $G$ shown in Figure \ref{fig:Prop3.13}.  Partition the vertices as $V_1=\{1,2,3\}$ and $V_2=\{4,5,6,7,8\}$, so $N_1=\{2\}$ and $N_2=\{6,7,8\}$.  Then 
$Z(G)=3=2+1=Z(G_1;N_1)+Z(G_2)$.
\epf

\begin{figure}[ht!]
    \centering
    \begin{tikzpicture}{
    \begin{scope}
    \node[label={200:$4$}] (A) at (-4.5,0) {};
    \node[label={250:$8$}] (B) at (-2,0) {};
    \node[label={200:$7$}] (C) at (0,-1) {};
    \node[label={[label distance=2pt]175:$2$}] (D) at (0,1) {};
    \node[label={290:$6$}] (E) at (2,0) {};
    \node[label={340:$5$}] (F) at (4.5,0) {};
    \node[label={180:$1$}] (G) at (-.75,2.5) {};
    \node[label={0:$3$}] (H) at (.75,2.5) {};
    \draw (A)--(B)--(C)--(E)--(F);
    \draw (B)--(D)--(E);\draw (C)--(D);
    \draw (D)--(G)--(H)--(D);
    \end{scope}}
    \end{tikzpicture}		\vspace{-2mm}
\caption{Graph $G$ for Proposition \ref{lemma_Zboundary}.\label{fig:Prop3.13}}
\end{figure}
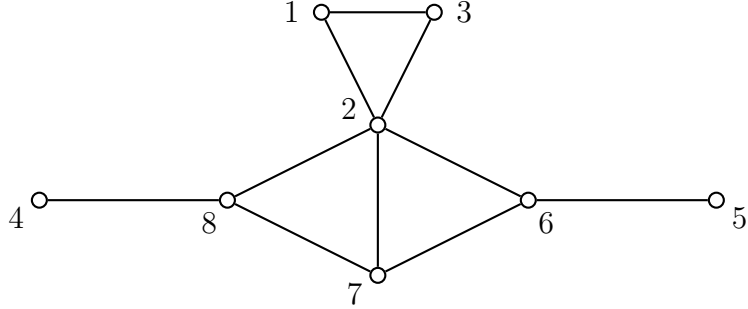


We conclude this section by considering $Z(G;X)$ when $X$ consists of a single vertex $v$.  To this end, we recall the concept of \emph{zero forcing spread} of a vertex, as defined in \cite{zspread}.  
Let $G$ be a graph and $v$ be a vertex in $G$. The \emph{zero forcing spread} of $v$ is defined as $z_v(G)=Z(G)-Z(G-v)$.

It is shown in \cite{zspread} that for any vertex $v$ in a graph $G$, $-1\leq z_v(G)\leq 1$,  and that $z_v(G)=1$ if and only if there exists a minimum zero forcing set of $G$ containing $v$ such that $v$ does not perform a force.  Furthermore, if $z_v(G)=-1$, then $v$ is not contained in any minimum zero forcing set of $G$. Since $Z(G)\leq Z(G; \{v\})\leq Z(G)+1,$ we have the following result. 
\begin{prop} For a graph $G$ and $v\in V(G)$
\[
Z(G;\{v\})=  \begin{cases} 
 Z(G) & \text{if } z_v(G)=1 \\
 Z(G)+1 & \text{if } z_v(G)=-1.  \\
 \end{cases}
\]
\end{prop}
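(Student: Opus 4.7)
The plan is to handle the two cases in the case distinction separately, in each case sandwiching $Z(G;\{v\})$ between the trivial bounds $Z(G) \leq Z(G;\{v\}) \leq Z(G)+1$ stated in Proposition \ref{trivialbounds} and then using the two structural facts about $z_v(G)$ recalled immediately before the proposition statement.

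For the first case, assume $z_v(G)=1$. The cited characterization from \cite{zspread} guarantees the existence of a minimum zero forcing set $B$ of $G$ that contains $v$ (in fact, one in which $v$ does not perform a force, though only the containment is needed here). Such a $B$ is, by definition, a zero forcing set of $G$ subject to $\{v\}$, so $Z(G;\{v\}) \leq |B| = Z(G)$. Combined with the trivial lower bound $Z(G)\leq Z(G;\{v\})$ from Proposition \ref{trivialbounds}, this gives equality.

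For the second case, assume $z_v(G)=-1$. The cited fact from \cite{zspread} is that $v$ lies in no minimum zero forcing set of $G$, so every zero forcing set of $G$ that contains $v$ must have cardinality strictly greater than $Z(G)$, i.e., at least $Z(G)+1$. Hence $Z(G;\{v\}) \geq Z(G)+1$. The matching upper bound $Z(G;\{v\})\leq Z(G)+1$ is supplied by Proposition \ref{trivialbounds}, so equality holds.

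There is really no obstacle: the proposition is a direct consequence of the recalled spread results and the general two-sided bound on $Z(G;\{v\})$, and the only thing to note is that the case $z_v(G)=0$ is intentionally not addressed since in that regime neither of the two structural facts applies and $Z(G;\{v\})$ can in principle take either value $Z(G)$ or $Z(G)+1$.
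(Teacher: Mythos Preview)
Your proof is correct and follows precisely the same reasoning the paper uses: the paper states the two facts from \cite{zspread} and the sandwich $Z(G)\leq Z(G;\{v\})\leq Z(G)+1$ in the paragraph immediately preceding the proposition and declares the result as an immediate consequence, which is exactly what you have spelled out.
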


For a graph $G$ and vertex $v\in V(G)$ with $z_v(G)=0$, it may be that $Z(G;\{v\})=Z(G)$ or $Z(G;\{v\})=Z(G)+1$. In Figure \ref{zvzero}, $z_v(G)=0$ and $Z(G;\{v\})=Z(G)=2$, while $z_u(G)=0$ and $Z(G;\{u\})  =3=Z(G)+1$.

\begin{figure}[ht!]
	\centering
	\begin{tikzpicture}{
		\begin{scope}
		\node (A) at (-2,0) {};
		\node (B) at (-1.25,-1) {};
		\node (D) at (0,-1) {};
		\node (E) at (1.75,0) {};
		\node (F) at (.75,0) {};
		\node [label={100:$u$}] (H) at (-0.65,1) {};
		\node [label={100:$v$}] (I) at (-3,0) {};
	    \node (J) at (-1.25,-2) {};
	    \node (K) at (0,-2) {};
	    
	  	\draw (A) -- (B);\draw (J)--(B)--(D)--(F);\draw (E)--(F);\draw (I)--(A)--(H)--(F);\draw (D)--(K);
	
		\end{scope}}
	\end{tikzpicture}
	\caption{Graph $G$ in which $z_v(G)=0$ and $Z(G;\{v\})=Z(G)$, while $z_u(G)=0$ and $Z(G;\{u\})=Z(G)+1$.}\label{zvzero}
\end{figure}
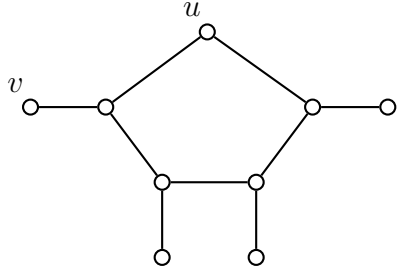

 Note that by numbering the desired vertex $v$ as the lowest number 0,  the Bruteforce algorithm implemented in the software \cite{mrsage} will return the desired vertex in a minimum zero forcing set if this is possible.  Thus the Bruteforce algorithm can be used to determine whether $ Z(G; \{v\})= Z(G)$ or $ Z(G; \{v\})= Z(G)+1$.

\subsection{Connections between restricted power domination and restricted zero forcing}\label{ss:connex}

Let the maximum degree of $G$ be denoted by $\Delta(G)$.

\begin{rem}
If $S$ is a power dominating set of $G$ that contains $X$, then $N[S]$ is a zero forcing set of $G$ that contains $N[X]$. Since $|N[S]|\leq (\Delta (G) +1)|S|$, we have  
\beq Z(G;N[X])\leq  (\Delta (G) +1)\gamma_P(G;X).\label{naivebd}\eeq
\end{rem}

The next results improve the bound in \eqref{naivebd}; their proofs use 
ideas from the proofs of \cite[Lemma 2]{Deanetal11} and \cite[Theorem 3.2]{REUF2015}, but differ in ensuring that the set $X$ is not inadvertently discarded.

\begin{lem}\label{lem:degbd}
	 Let $G$ be a graph with no isolated vertices and let $S=\{u_{1},\dots,u_t\}$ be a power dominating set  for $G$ that contains $X$.   
	 Then $Z(G;X)\le\sum_{i=1}^t\deg u_i$ and this bound is tight.
\end{lem}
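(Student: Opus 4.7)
The plan is to construct an explicit zero forcing set $B$ of $G$ containing $X$ with $|B|\le \sum_{i=1}^t \deg u_i$ by emulating the domination step of the power domination process. Since $G$ has no isolated vertices, each $u_i\in S$ has at least one neighbor; I would pick an arbitrary neighbor $w_i$ of $u_i$ and set $B_i := N[u_i]\setminus\{w_i\}$, so that $|B_i|=\deg u_i$ and, crucially, $u_i\in B_i$. Taking $B := \bigcup_{i=1}^t B_i$ immediately gives $|B|\le \sum_{i=1}^t \deg u_i$, and because every $u_i$ is retained in $B_i$ we have $S\subseteq B$, hence $X\subseteq B$.

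This is precisely where the argument diverges from the ones cited in \cite{Deanetal11, REUF2015}. The more natural choice $B_i=N(u_i)$ has the same cardinality $\deg u_i$ but could discard $u_i$ itself, potentially removing a vertex of $X$ from the candidate set. Discarding a neighbor $w_i$ rather than $u_i$ neutralizes this concern.

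To verify that $B$ is a zero forcing set, I would show $N[S]\subseteq cl(B)$ and then appeal to the assumption that $S$ is a power dominating set. For each $i$, either $w_i\in B$ (because it happens to lie in some other $B_j$), in which case $N[u_i]\subseteq B\subseteq cl(B)$, or $w_i\notin B$, in which case $w_i$ is the unique white neighbor of $u_i$, since $N(u_i)\setminus\{w_i\}\subseteq B_i$, and so $u_i\to w_i$. Either way $N[u_i]\subseteq cl(B)$. Taking the union over $i$ yields $N[S]\subseteq cl(B)$, and because $S$ is a power dominating set the zero forcing process starting from $N[S]$ colors all of $V$, so $cl(B)=V$.

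For tightness I would exhibit $G=K_n$ with $n\ge 2$ and $X=S=\{u\}$: then $S$ is a power dominating set, $\sum_i \deg u_i=n-1$, and any zero forcing set containing $u$ must also contain $n-2$ other vertices before $u$ can force, giving $Z(K_n;\{u\})=n-1$. (The star $K_{1,p}$ with $u$ the center, $\deg u = p$, works analogously.) I do not anticipate a real obstacle: the only delicate point is the simultaneous scheduling of the forces at the $u_i$'s, but this is immediate because from the outset each $u_i$ has its white neighborhood contained in $\{w_i\}$, so each $u_i$ can act (or need not act) independently of the order in which the other forces are performed.
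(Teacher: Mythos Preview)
Your proof is correct and follows essentially the same approach as the paper's: both construct $B$ by taking $N[u_i]$ minus one neighbor so that $u_i$ (and hence $X$) is retained, then observe that each $u_i$ has at most one white neighbor and can immediately force it, giving $N[S]\subseteq cl(B)$. The only cosmetic difference is that the paper explicitly selects the discarded neighbor from $N(u_i)\setminus S$ (with a separate case when $N(u_i)\subseteq S$), whereas you allow an arbitrary neighbor and implicitly handle the case $w_i\in S$ via $S\subseteq B$; both arguments and both tightness examples (your $K_n$, the paper's path endpoint) are valid.
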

\bpf  
For $i=1,\dots, t$, define
\[U_i=\left\{\begin{array}{ll}
N[u_i]\setminus \{v_i\} &  \mbox{  where $v_i\in N(u_i)\setminus S$ if $N(u_i)\setminus S\ne \emptyset$}\\
\{u_i\} &   \mbox{ if }N(u_i)\setminus S= \emptyset,\end{array}\right.
\] and $B=\cup_{i=1}^t U_i.$
Then $|B|\le \sum_{i=1}^t\deg u_i,$ because $|N[u_i]\setminus \{v_i\}|\le \deg u_i+1-1=\deg u_i$ when $N(u_i)\setminus S\ne \emptyset$, and when $N(u_i)\setminus S= \emptyset$,  $|\{u_i\}|=1$ and $\deg u_i\ge 1$.  

We show that $B$
is a zero forcing set of G.  Since $B\supseteq S\supseteq X$, this implies $Z(G;X)\le| B|\le \sum_{i=1}^t\deg u_i.$
Color  all the vertices in $B$ blue and the vertices in $V(G)\setminus B$ white. Then every vertex of $S$ is colored blue, and for $i=1,\dots, t$, at most one vertex in $N[u_i]$  is white  ($v_i$ is the only possibility).  So $u_i$ can immediately force $v_i$ if necessary, and $N[S]$ is colored blue.  Since $S$ is a power dominating set of $G$,
$N[S]$ is a zero
forcing set of G, and therefore $B$ is a zero forcing set of $G$.  

Whenever $Z(G)\le\sum_{i=1}^t\deg u_i$ is tight and  $X\subseteq S$, then the bound $Z(G;X)\le\sum_{i=1}^t\deg u_i$ is tight.  A path with $S=X$ consisting of an endpoint is an example.  \epf

\begin{thm}\label{betterbd}
Let $G$ be a graph with  $\Delta (G)\ge 1$.  Then $\lc\frac{Z(G;X)}{\Delta(G)}\rc\le \pd(G;X)$ and this bound is tight.
\end{thm}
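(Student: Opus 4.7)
The plan is to deduce this inequality essentially as a direct consequence of Lemma \ref{lem:degbd}. First I would handle the case when $G$ has no isolated vertices. Let $S = \{u_1,\dots,u_t\}$ be a minimum power dominating set of $G$ subject to $X$, so that $t = \gamma_P(G;X)$ and $X \subseteq S$. Lemma \ref{lem:degbd} yields
\[
Z(G;X) \;\le\; \sum_{i=1}^t \deg u_i \;\le\; t\,\Delta(G) \;=\; \Delta(G)\,\gamma_P(G;X).
\]
Dividing by $\Delta(G) \ge 1$ and using that $\gamma_P(G;X)$ is an integer, we obtain $\lceil Z(G;X)/\Delta(G)\rceil \le \gamma_P(G;X)$.

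Next I would address the possibility that $G$ has isolated vertices, since Lemma \ref{lem:degbd} requires none. Let $I$ be the set of isolated vertices of $G$; since $\Delta(G) \ge 1$, the subgraph $G - I$ is nonempty with $\Delta(G-I) = \Delta(G)$. Every isolated vertex must appear in any power dominating set and any zero forcing set, so
\[
\gamma_P(G;X) = \gamma_P(G-I;\, X\setminus I) + |I|, \qquad Z(G;X) = Z(G-I;\, X\setminus I) + |I|.
\]
Applying the no-isolated-vertex case to $G-I$ and adding $|I|$ to both sides gives
\[
Z(G;X) \le \Delta(G)\,\gamma_P(G-I; X\setminus I) + |I| = \Delta(G)\,\gamma_P(G;X) - |I|(\Delta(G)-1) \le \Delta(G)\,\gamma_P(G;X),
\]
and the ceiling bound again follows.

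For tightness, I would point to simple examples where equality holds. For instance, with $X = \emptyset$ and $G = K_{1,p}$ we have $Z(G) = p-1$, $\gamma_P(G) = 1$, $\Delta(G) = p$, so $\lceil (p-1)/p\rceil = 1 = \gamma_P(G)$. A path $P_n$ (with $X$ an endpoint, or $X = \emptyset$) furnishes another tight instance.

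I do not anticipate a significant obstacle here, as the substantive content is packaged in Lemma \ref{lem:degbd}; the only minor care needed is the bookkeeping for isolated vertices, which is handled by peeling them off and using that each such vertex contributes equally to both sides (while $\Delta(G)-1 \ge 0$ makes the leftover term harmless).
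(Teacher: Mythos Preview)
Your proposal is correct and follows essentially the same approach as the paper: apply Lemma \ref{lem:degbd} to a minimum restricted power dominating set, bound $\sum\deg u_i$ by $t\,\Delta(G)$, and take care of isolated vertices separately. The paper handles isolated vertices by summing over connected components (each isolated vertex being its own component) rather than by peeling off the set $I$, and uses $K_n$ instead of $K_{1,p}$ for tightness, but these are cosmetic differences.
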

\bpf  Suppose $G$ has connected components $G_1,\dots, G_h$.  Let $X_j=X\cap V(G_j)$ and suppose $G_i$ is a component   that has an edge, so $G_i$ does not have isolated vertices.   Choose a minimum power dominating set   $S_i=\{u_1^{(i)},\dots,u_{t_i}^{(i)}\}\supseteq X_i$  for $G_i$, so $t_i=\pd(G_i;X_i)$.     
Then by Lemma \ref{lem:degbd},  $Z(G_i;X_i)\le \sum_{i=1}^{t_i}\deg u_i^{(i)}\le t_i\Delta(G_i)=\pd(G_i;X_i)\Delta(G_i)$.
Thus  $\pd(G_i;X_i)\ge\lc\frac{Z(G_i;X_i)}{\Delta(G_i)}\rc\ge \lc\frac{Z(G_i;X_i)}{\Delta(G)}\rc$.  

Since  $\Delta(G)\ge 1$, we have $\pd(G_j;X_j)\ge \lc\frac{Z(G_j;X_j)}{\Delta(G)}\rc$ for every component $G_j$  of $G$ (including isolated vertices).  Thus, 
\[
\pd(G;X)=\sum_{j=1}^h\pd(G_j;X_j)\ge \sum_{j=1}^h\lc\frac{Z(G_j;X_j)}{\Delta(G)}\rc\ge\lc \frac{\sum_{j=1}^h Z(G_j;X_j)}{\Delta(G)}\rc=\lc\frac{Z(G;X)}{\Delta(G)}\rc. 
\]

Whenever $\lc\frac{Z(G)}{\Delta(G)}\rc\le \pd(G)$ is tight and  $X\subseteq S$, then $\lc\frac{Z(G;X)}{\Delta(G)}\rc\le \pd(G;X)$ is tight.    An example is $K_n$ because $Z(K_n)=\Delta(K_n)=n-1$ and $\pd(K_n)=1$.
\epf


\section{Exact and algorithmic results}
\label{sexactPD}

In this section, we present exact results and algorithms for $\gamma_P(G;X)$ and $Z(G;X)$ for certain families of graphs.

\subsection{Graphs with bounded treewidth}

Our first result uses Proposition \ref{prop_leafB} to show that the restricted power domination number of any graph with bounded treewidth can be computed efficiently.

\begin{thm}\label{computeTC}
For any graph $G=(V,E)$ with bounded treewidth and any set $X\subseteq V,$ a minimum power dominating set of $G$ subject to $X$ can be computed in $O(n)$ time.
\end{thm}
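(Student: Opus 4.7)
The plan is to reduce the restricted problem on $G$ to an \emph{unrestricted} power domination problem on the enlarged graph $\ell_3(G,X)$, and then invoke a known linear-time algorithm for power domination on graphs of bounded treewidth. By Proposition~\ref{prop_leafB}, a set $S$ is a minimum power dominating set of $G$ subject to $X$ if and only if $S$ is a minimum power dominating set of $\ell_3(G,X)$. Hence it suffices to compute $\gamma_P(\ell_3(G,X))$ and recover an optimal set.

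First I would show that $\ell_3(G,X)$ can be built in $O(n)$ time and still has bounded treewidth. Since $|X|\le n$, the graph $\ell_3(G,X)$ has at most $4n$ vertices and $|E|+3|X|$ edges, so its construction is linear. Given a width-$k$ tree decomposition of $G$, for each pendant $\ell_i^j$ attached to $v_i\in X$ I would introduce a new bag $\{v_i,\ell_i^j\}$ and attach it (as a leaf of the decomposition tree) to any existing bag containing $v_i$; this yields a tree decomposition of $\ell_3(G,X)$ of width $\max(k,1)$. Thus the treewidth remains bounded by a constant depending only on $\operatorname{tw}(G)$, and the new decomposition has $O(n)$ bags, constructible in $O(n)$ time.

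Next I would invoke the fact that a minimum power dominating set of a graph of bounded treewidth can be computed in linear time. This can be obtained either from a direct dynamic-programming algorithm on a tree decomposition that tracks, for each bag, which vertices are currently observed and the relevant ``boundary'' propagation state, or as a consequence of Courcelle's theorem applied to an $\mathrm{MSO}_2$ formulation of the power-observation closure. Either approach runs in time $O(f(k)\cdot n)=O(n)$ for bounded $k=\operatorname{tw}(G)$. Running this algorithm on $\ell_3(G,X)$ and invoking Proposition~\ref{prop_leafB} produces a minimum power dominating set of $G$ subject to $X$ in overall linear time.

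The main obstacle is the third step: unlike standard domination, power domination is not a local property, because the propagation phase may cascade globally across the graph. However, this difficulty is already resolved in the literature -- the closure process is expressible in $\mathrm{MSO}_2$ (one quantifies over the set of observed vertices and asserts the fixed-point condition), so Courcelle's theorem yields the linear-time bound. Thus the bulk of the proof is really the (routine) verification that attaching pendants preserves bounded treewidth and the application of Proposition~\ref{prop_leafB}; the algorithmic core is inherited from the bounded-treewidth machinery.
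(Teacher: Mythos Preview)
Your proposal is correct and follows essentially the same approach as the paper: reduce to unrestricted power domination on $\ell_3(G,X)$ via Proposition~\ref{prop_leafB}, observe that appending leaves preserves bounded treewidth and at most quadruples the order, and then invoke a linear-time algorithm for power domination on bounded-treewidth graphs. The only difference is that the paper cites the explicit algorithm of Guo, Niedermeier, and Raible~\cite{GNR08} for that last step, whereas you appeal more generically to dynamic programming or Courcelle's theorem; the paper's citation is cleaner since the MSO$_2$ expressibility of the power-domination closure, while true, is not entirely routine to spell out.
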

\bpf
The treewidth of $G$ does not change when an arbitrary number of leaves are appended to some or all of the vertices of $G$, so if $G$ has treewidth at most $k$ then $\ell _3(G,X)$ also has treewidth at most $k$. By Proposition \ref{prop_leafB}, a set $S$ is a minimum power dominating set of $G$ subject to $X$ if and only if $S$ is a minimum power dominating set of $\ell _3(G,X)$. Since  $|V(\ell _3(G,X))|\le 4|V(G)|$, the linear time algorithm presented by Guo et al.  in \cite{GNR08} applied to $\ell _3(G,X)$ yields a minimum power dominating set of $\ell _3(G,X)$ in $O(n)$ time.  Thus, $\gamma_P(G;X)$ can also be obtained in $O(n)$ time. 
\epf

Next we use some results of Section \ref{section_struct} to design a parallel algorithm for computing the power domination numbers of trees. 

 \begin{thm}
\label{prop_tree_parallel}
Let $T$ be a tree, $v$ be a vertex of $T$ with $\deg v\ge 2$, $V_1,\ldots,V_k$ be the vertex sets of the components of $T-v$, and $T_i=T[V_i\cup \{v\}]$ for $1\leq i\leq k$. Partition the indices $1,\ldots,k$ as follows:
\begin{eqnarray*}
&I&=\{i:\gamma_P(T_i;\{v\})=\gamma_P(T_i) \text{ and } 1+\gamma_P(T_i-v)\neq\gamma_P(T_i;\{v\})\},\\
&I'&=\{i:\gamma_P(T_i;\{v\})=\gamma_P(T_i) \text{ and } 1+\gamma_P(T_i-v)=\gamma_P(T_i;\{v\})\},\\
&J&=\{i:\gamma_P(T_i;\{v\})\neq\gamma_P(T_i)\}. 
\end{eqnarray*}
 Let $g=-k+\sum_{i=1}^k\gamma_P(T_i;\{v\})$. Then, 
\[
\gamma_P(T)=
\begin{cases}
g+1 &\text{ if }  |I|\geq 2 \text{ or } |J|=0\\
g &\text{ if } |I|\leq 1 \text{ and } |J|\geq 1.
\end{cases}
\]
\end{thm}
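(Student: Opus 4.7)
The plan is to establish the formula via matching upper and lower bounds. The key structural fact is that, because $T$ is a tree and each $V_i$ is a component of $T-v$, the vertex $v$ has exactly one neighbor $u_i$ in each $V_i$, so $v$ is a leaf in every $T_i$. Applying Lemma \ref{leafnoforce} with $u=v$ to each $T_i$, the three buckets admit the following interpretation: for $i \in I$, $v$'s initial domination of $u_i$ is essential in every minimum PDS of $T_i$ subject to $\{v\}$ (so $\gamma_P(T_i-v) = \gamma_P(T_i;\{v\})$); for $i \in I'$, some such minimum PDS $S'$ has $S'\setminus\{v\}$ a PDS of $T_i-v$; and for $i \in J$, no minimum PDS of $T_i$ contains $v$, with $\gamma_P(T_i - v) = \gamma_P(T_i) = \gamma_P(T_i;\{v\})-1$.

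For the universal upper bound $\gamma_P(T)\leq g+1$, I would fix a minimum PDS $S_i'$ of each $T_i$ subject to $\{v\}$ and set $S = \bigcup_i S_i'$, which has size $1+\sum_i(\gamma_P(T_i;\{v\})-1) = g+1$. Because every $V_i$-vertex has all its neighbors in $V(T_i)$, the $T$-process from $S$ restricted to $T_i$ matches the $T_i$-process from $S_i'$, so $S$ is a PDS of $T$. For the sharper bound $\gamma_P(T)\leq g$ when $|I|\leq 1$ and $|J|\geq 1$, I would pick $j_0\in J$ and use a minimum PDS $S_{j_0}$ of $T_{j_0}$ (which automatically omits $v$); for each $i \in I'\cup J$ with $i\neq j_0$, pick (via Lemma \ref{leafnoforce}(3)) a minimum PDS $S_i$ of $T_i-v$; and for the at-most-one $i^*\in I$, pick $S_{i^*}$ so that $S_{i^*}\cup\{v\}$ is a minimum PDS of $T_{i^*}$ subject to $\{v\}$. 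The union has size $g$, and can be verified to be a PDS of $T$ by tracing the process: $S_{j_0}$ colors all of $T_{j_0}$ (hence $v$); each $S_i$ for $i\in I'\cup J\setminus\{j_0\}$ colors $V_i$ independently of $v$; once every $u_j$ for $j\neq i^*$ is blue, $v$'s unique white neighbor is $u_{i^*}$, so $v$ forces $u_{i^*}$, after which $S_{i^*}\cup\{v\}$ completes $V_{i^*}$.

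For the lower bound, let $S$ be any PDS of $T$ and write $S_i=S\cap V_i$. If $v\in S$, then $S_i\cup\{v\}$ is a PDS of $T_i$ containing $v$, giving $|S_i|\geq\gamma_P(T_i;\{v\})-1$ and hence $|S|\geq g+1$. If $v\notin S$, I would argue there exists $j_0$ such that $S_{j_0}$ is itself a PDS of $T_{j_0}$: since $v$ is a cut vertex and no $V_i$-vertex has any neighbor outside $V(T_i)$, the $T$-process restricted to $V(T_{j_0})$ coincides with the $T_{j_0}$-process from $S_{j_0}$, and some such $j_0$ must be responsible for $v$'s eventual coloring. For every $i\neq j_0$, the requirement that $V_i$ be colored in $T$'s process translates (using that, once $v$ is blue, the dynamics in $V_i$ match those of $T_i-v$ save for a possible force $v\to u_i$) to $S_i\cup\{v\}$ being a PDS of $T_i$ subject to $\{v\}$, giving $|S_i|\geq\gamma_P(T_i;\{v\})-1$. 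Summing yields $|S|\geq g$, with an extra $+1$ whenever $j_0\in I\cup I'$ (because then $\gamma_P(T_{j_0})=\gamma_P(T_{j_0};\{v\})$). In particular, $|J|=0$ forces $j_0\in I\cup I'$ and $|S|\geq g+1$.

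The main obstacle is sharpening this to $|S|\geq g+1$ when $|I|\geq 2$ (with $j_0\in J$). The crucial observation is that $v$ can perform at most one force in the entire $T$-process: if $v$ forces $u_{i_1}$ at time $\tau_1$ and $u_{i_2}$ at time $\tau_2>\tau_1$, then at $\tau_1$ the vertex $u_{i_2}$ must be blue (as a non-target neighbor of $v$), yet $u_{i_2}$ is still white at $\tau_2$, a contradiction. Now for $i\in I$ with $|S_i|=\gamma_P(T_i;\{v\})-1=\gamma_P(T_i-v)-1$, the set $S_i$ is too small to PD $T_i-v$, so coloring $V_i$ requires $v$ to force $u_i$. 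If $|I|\geq 2$, then two distinct indices of $I$ would each demand this single force from $v$, which is impossible; hence at least one $i\in I$ must have $|S_i|\geq\gamma_P(T_i;\{v\})$ rather than $\gamma_P(T_i;\{v\})-1$, contributing the needed extra $+1$ and yielding $|S|\geq g+1$.
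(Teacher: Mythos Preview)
Your proof is correct and follows essentially the same strategy as the paper: sandwich $\gamma_P(T)$ between $g$ and $g+1$ via $\gamma_P(T;\{v\})=g+1$, construct an explicit power dominating set of size $g$ when $|I|\le 1$ and $|J|\ge 1$, and argue the lower bound $g+1$ in the remaining cases. Your treatment of the lower bound is in fact more explicit than the paper's---in particular, your observation that $v$ can perform at most one force, and hence cannot serve two branches in $I$ simultaneously, makes precise what the paper only sketches when it asserts that for $|I|\ge 2$ the vertex $v$ ``must be contained in some minimum power dominating set of $T$.''
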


\bpf
Note that $v$ is a leaf in each $T_i$.  If $v$ is in a power dominating set of $T$, forcing can occur independently in each $T_i$ after $v$ and its neighbors have been dominated, so $\gamma_P(T;\{v\})=g+1$.  Since $\gamma_P(T) \leq \gamma_P(T;\{v\}) \leq \gamma_P(T)+1$, it follows that $g\leq \gamma_P(T)\leq g+1$. 

By Lemma \ref{leafnoforce}, $I$ is the index set of the subtrees of $T$ for which it is beneficial to include $v$ in a power dominating set, and where $v$ performs a dominating step or force; $I'$ is the index set of the subtrees of $T$ for which it is beneficial to include $v$ in a power dominating set, but where $v$ does not need to perform a dominating step or force; $J$ is the index set of the subtrees of $T$ for which it is not beneficial to include $v$ in a power dominating set.


For $1\leq i\leq k$, let $S_i$ be a set realizing $\gamma_P(T_i;\{v_i\})$ in $T_i$. Note that by definition of $J$, for $i\in J$, $S_i$ can be chosen such that $S_i\backslash\{v\}$ is a set realizing $\gamma_P(T_i)$. By Lemma \ref{leafnoforce}, for $i\in I'$, $S_i$ can be chosen such that $v$ does not need to perform a force. Suppose first that $|I|\leq 1$ and $|J|\geq 1$; we claim that $\bigcup_{i=1}^k(S_i\backslash\{v\})$ is a power dominating set of $T$. To see why, note that for each $i\in J$, the set $S_i\backslash\{v\}$ will force all of $V(T_i)$ in $T$, including $v$. Then for $i\in I'$, all components $T_i$ can be forced by the sets $S_i\backslash\{v\}$, $i\in I'$, since $v$ is colored but does not need to perform a force in those components.  Finally, if there is a component $T_{i^*}$, $i^*\in I$, $v$ will have a single uncolored neighbor at this step of the forcing process (which is in $T_{i^*}$), and it can force this neighbor; since $v$ is a leaf in $T_{i^*}$, this is the same as dominating its neighbor. Thus, $S_{i^*}\backslash\{v\}$ can power dominate $T_{i^*}$ after all other components are colored. Since $S_i\backslash\{v\}$, $1\leq i\leq k$ are pairwise disjoint, it follows that
\[
\left|\bigcup_{i=1}^k(S_i\backslash\{v\})\right|=\sum_{i=1}^k|S_i\backslash\{v\}|=\sum_{i=1}^k(\gamma_P(T_i;\{v\})-1)=g\geq \gamma_P(T)\geq g.
\]

Now suppose $|I|\geq 2$; then, $v$ is beneficial and forcing in at least 2 components, thus it must be contained in some minimum power dominating set of $T$. Similarly, if $|J|=0$ but $v$ is not in a minimum power dominating set of $T$, then none of the sets $S_i\backslash\{v\}$ can force $v$. In both cases, it follows that $\gamma_P(T)=g+1$. 
\epf

Note that by Theorem \ref{computeTC}, the parameter $g$ and the index sets $I$ and $J$ in Theorem \ref{prop_tree_parallel} can be determined by computing the ordinary power domination numbers of several smaller trees. If the vertex $v$ is picked in a way that separates $T$ into roughly equally-sized components, Theorem \ref{prop_tree_parallel} gives an efficient way to compute $\gamma_P(T)$ in parallel, since several processors can be used to compute the power domination numbers of the smaller trees independently.

%
%

\subsection{Integer programming formulations}

In this section, we present integer programming models for finding a minimum restricted power dominating set of a graph. Some of the following definitions and models are adapted from \cite{BFH17}. A \emph{fort} of a graph $G=(V,E)$ is a nonempty set $F\subset V$ such that no vertex outside $F$ is adjacent to exactly one vertex in $F$.

\begin{prop}
\label{prop_fort}
Let $G=(V,E)$ be a graph and $F$ be any fort of $G$.
\ben
\item\label{fpd} If $S$ is a power dominating set of $G$, then $S\cap N[F]\neq \emptyset$.
\item If $B$ is a zero forcing set of $G$, then $S\cap F\neq \emptyset$.
\een
\end{prop}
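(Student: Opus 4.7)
The plan is to prove both parts by contradiction, in each case tracking the first vertex of $F$ that becomes blue (or equivalently, is added to $PD(S)$) and using the forcing step that accomplishes this to contradict the defining property of a fort: no vertex outside $F$ has exactly one neighbor in $F$.

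I will start with part (2), which is the cleaner case. (Note that the statement should read $B\cap F\neq\emptyset$.) Assume for contradiction that $B$ is a zero forcing set with $B\cap F=\emptyset$, so every vertex of $F$ is initially white. Fix a chronological list of forces that realizes $cl(B)=V(G)$ and, since $F$ is nonempty, let $v$ be the first vertex of $F$ forced, say by a vertex $u$. At the moment of this force, $v$ is the unique white neighbor of $u$, and by the minimality of $v$ the entire set $F$ is still white. Since $u$ is already blue it cannot lie in $F$, so $u\notin F$ and $u$ has exactly one neighbor in $F$, namely $v$. This contradicts the fort property.

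For part (1), assume $S$ is a power dominating set with $S\cap N[F]=\emptyset$. The first observation is that this condition is equivalent to $N[S]\cap F=\emptyset$: no vertex of $S$ lies in $F$ or is adjacent to a vertex of $F$, so no vertex of $F$ can be in $N[S]$. Consequently, at the end of the domination step (when $PD(S)=N[S]$), every vertex of $F$ is still white. Now the remainder of the power domination process is exactly the zero forcing process initialized with the blue set $N[S]$, so I repeat the argument above: let $v\in F$ be the first vertex of $F$ that joins $PD(S)$, and let $u$ be the vertex performing the force, so $|N(u)\setminus PD(S)|=1$ and $u\in PD(S)$. Because all of $F$ is still outside $PD(S)$, the blue vertex $u$ lies outside $F$, and $v$ is its unique neighbor in $F$, again violating the fort condition.

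The only subtle point — and the main thing to get right — is justifying why the forcing vertex $u$ must lie outside $F$: this is precisely where choosing $v$ as the \emph{first} element of $F$ to be forced pays off, because then $F$ is entirely white at that instant while $u$ is blue. Everything else is immediate from the definitions, so the proof should be short, with the two parts sharing essentially one argument once the equivalence $S\cap N[F]=\emptyset \iff N[S]\cap F=\emptyset$ has been noted.
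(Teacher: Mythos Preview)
Your proof is correct and follows essentially the same approach as the paper: both argue by contradiction, consider the first vertex of $F$ to become observed, and derive that the forcing vertex lies outside $F$ with a unique neighbor in $F$, violating the fort property. Your presentation is slightly more explicit (spelling out why the forcing vertex is outside $F$, and noting the equivalence $S\cap N[F]=\emptyset \iff N[S]\cap F=\emptyset$ to reduce part~(1) to part~(2)), and you correctly flag the typo $S\cap F$ for $B\cap F$ in the statement.
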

\bpf
We prove the first statement; the proof of the second is a simplified version of that proof. Suppose to the contrary  
that $S\cap N[F]=\emptyset$, and let $v$ be the first vertex in $F$ to be observed by $S$. Since no neighbor of $v$ is in $S$, $v$ is not observed in the domination step. Thus, $v$ must be forced by some vertex $w$ not in $F$. However, by definition of a fort, $w$ is adjacent to at least two vertices in $F$, so $w$ cannot force $v$; this is a contradiction.
\epf

We now present an integer program which can be used to find a restricted power dominating set subject to $X$. The binary variable $p_v$ indicates whether vertex $v$ is in the restricted power dominating set; $\mathcal{S}$ is the set of all forts of the given graph $G=(V,E)$.

\begin{Model}{IP model for restricted power domination based on forts}
\label{model_forts}
\begin{eqnarray}
\nonumber\min && \sum_{v \in V}p_{v}\\
\emph{s.t.:} && \sum_{v \in N[F]}p_v \geq 1  \qquad \forall F \in \mathcal{S}\label{M1C1}\\
&&p_v=1 \qquad\qquad\;\;\, \forall v \in X\label{M1C2}\\
\nonumber &&p_v\in\{0,1\}\qquad\quad\forall v \in V
\end{eqnarray}
\end{Model}
\begin{thm}
\label{thm4}
 The optimum of Model~\ref{model_forts} is equal to $\gamma_P(G;X)$.
\end{thm}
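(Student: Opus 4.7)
The plan is to show the two inclusions: (a) every restricted power dominating set yields a feasible integer solution with the same objective value, and (b) every feasible integer solution yields a restricted power dominating set of the same size. The first inclusion is essentially Proposition \ref{prop_fort}(1) together with the constraint \eqref{M1C2}; the second requires producing, for any "bad" candidate set $S$, a fort witnessing infeasibility.

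For direction (a), given a power dominating set $S$ of $G$ with $X \subseteq S$, I would define $p_v = 1$ if $v \in S$ and $p_v = 0$ otherwise. Constraint \eqref{M1C2} holds because $X \subseteq S$. For constraint \eqref{M1C1}, fix any fort $F \in \mathcal{S}$; Proposition \ref{prop_fort}(\ref{fpd}) gives $S \cap N[F] \neq \emptyset$, so $\sum_{v \in N[F]} p_v \geq 1$. This shows $\textnormal{OPT} \leq \gamma_P(G;X)$.

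For direction (b), suppose $(p_v)_{v \in V}$ is feasible with value $k$, and let $S = \{v : p_v = 1\}$. Then $|S| = k$ and $X \subseteq S$ by \eqref{M1C2}. The key claim is that $S$ is a power dominating set of $G$. Suppose not; let $U = V \setminus PD(S)$, which is nonempty. I would prove $U$ is a fort: if some $w \notin U$ were adjacent to exactly one $u \in U$, then at the moment the power domination process terminates, $w \in PD(S)$ has exactly one neighbor, namely $u$, outside $PD(S)$, so the forcing rule would apply to force $u$, contradicting $u \in U$. Moreover $S \cap N[U] = \emptyset$: we have $S \subseteq N[S] \subseteq PD(S)$, so $S \cap U = \emptyset$; and if some $s \in S$ had a neighbor $u \in U$, then $u \in N[S] \subseteq PD(S)$, a contradiction. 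Thus the fort $U$ witnesses $\sum_{v \in N[U]} p_v = 0$, violating \eqref{M1C1}. So $S$ is indeed a power dominating set containing $X$, giving $\gamma_P(G;X) \leq k$. Combined with (a), this yields equality.

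The main obstacle is the verification that $U = V \setminus PD(S)$ is a fort, because one must argue this at the correct moment of the propagation process and make sure the forcing rule's hypothesis (exactly one white neighbor) really is triggered when $w$ has only $u$ white among its neighbors. Everything else is essentially bookkeeping: matching the IP constraints to the definitions of power domination and fort.
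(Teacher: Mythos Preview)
Your proof is correct and follows essentially the same approach as the paper: one direction uses Proposition~\ref{prop_fort}(\ref{fpd}) to verify feasibility, and the other shows that if $S$ fails to power dominate then the complement $U=V\setminus PD(S)$ (equivalently $V\setminus cl(N[S])$) is a fort with $S\cap N[U]=\emptyset$, violating \eqref{M1C1}. The only cosmetic difference is notation ($PD(S)$ versus $cl(N[S])$) and the order in which the two directions are presented.
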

\bpf
Let $p$ be an optimal solution of Model~\ref{model_forts}, and let $S$ be the set of all vertices $v$ for which $p_v=1$. By constraint (\ref{M1C2}), $S$ contains $X$. Suppose to the contrary that $S$ is not a power dominating set of $G$, which means $cl(N[S]) \neq V$. If any vertex $u\in cl(N[S])$ is adjacent to exactly one vertex $v\in V\backslash cl(N[S])$, then $u$ could force $v$, contradicting the definition of $cl(N[S])$. Thus, $V\backslash cl(N[S])$ is a fort. Moreover, since $V\backslash cl(N[S])$ does not contain any vertices of $N[S]$, $N[V\backslash cl(N[S])]$ does not contain any vertex of $S$. This means constraint (\ref{M1C1}) is violated by the fort $V\backslash cl(N[S])$, a contradiction. It follows that $S$ is a restricted power dominating set of $G$ subject to $X$, and is minimum due to the objective function. 

Next, let $S$ be a minimum restricted power dominating set of $G$ subject to $X$, and $p$ be the vector whose nonzero entries are indexed by the vertices in $S$. By Proposition \ref{prop_fort}, for every $F\in \mathcal{S}$, $N[F]$ contains an element of $S$; thus, $p$ satisfies constraint (\ref{M1C1}). Moreover, $S$ contains $X$, so $p$ satisfies constraint (\ref{M1C2}). Thus, $p$ is a feasible solution of Model~\ref{model_forts}.
\epf

By an argument  similar to that given in the proof of Theorem \ref{thm4}, it follows that if constraint (\ref{M1C1}) of Model \ref{model_forts} is replaced by $\sum_{v\in F} p_v\geq 1 \;\forall F\in \mathcal{S}$, the optimum of the modified integer program will be equal to $Z(G;X)$.

Since a graph (e.g. $K_n$) could have an exponential number of forts, Model~\ref{model_forts} must in general be solved using constraint generation. This approach requires a practical method for finding violated constraints; to this end, we present an auxiliary integer program for generating violated fort constraints in Model~\ref{model_constraints}. In this model, $S$ is the set of all vertices for which $p_v = 1$ in the current optimal solution of Model~\ref{model_forts}; thus, $S$ is constant in Model~\ref{model_constraints}. The binary variable $f_v$ indicates whether vertex $v$ is in the fort. 

\begin{Model}{IP model for finding violated fort constraints}
\label{model_constraints}
\begin{eqnarray}
\nonumber\min&& \sum_{v \in V}f_{v}\\
\emph{s.t.:}&& \sum_{v \in V}f_v \geq 1\label{M3C1}\\
&&f_w +\sum_{u \in N(w)\backslash \{v\}} f_u \geq f_v\qquad\forall(v,w) \emph{ with } v \in V, w \in N(v)\label{M3C2}\\
&&f_v = 0\qquad\qquad\qquad\qquad\;\forall v \in cl(N[S])\label{M3C3}\\
\nonumber&& f_v\in\{0,1\} \qquad\qquad\qquad\;\;\forall v \in V
\end{eqnarray}
\end{Model}
\begin{thm}
Model~\ref{model_constraints} finds a minimum-size violated fort.
\end{thm}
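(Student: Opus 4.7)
The plan is to show that the feasible $0/1$ vectors of Model~\ref{model_constraints} are exactly the indicator vectors of forts $F \subseteq V$ that violate constraint (\ref{M1C1}) of Model~\ref{model_forts} for the current $S$, i.e.\ forts with $S \cap N[F] = \emptyset$. Since the objective $\sum_v f_v$ equals $|F|$ under this correspondence, a minimum-cost feasible solution yields a minimum-size violated fort.

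First I would verify that constraint (\ref{M3C2}) encodes the fort property. Reading the constraint in the informative case $f_v = 1$: if $v$ lies in $F := \{u : f_u = 1\}$ and $w \in N(v)$, then either $f_w = 1$ (so $w \in F$) or some $u \in N(w) \setminus \{v\}$ has $f_u = 1$ (so $w$ has a neighbor in $F$ other than $v$). This is precisely the condition that every $w \notin F$ satisfies $|N(w) \cap F| \neq 1$. Conversely, if $F$ fails the fort property because some $w \notin F$ has a unique neighbor $v \in F$, then (\ref{M3C2}) is violated at this pair $(v,w)$. Combined with (\ref{M3C1}), which forces $F \neq \emptyset$, the feasible vectors that additionally satisfy (\ref{M3C3}) are exactly the indicators of forts contained in $V \setminus cl(N[S])$.

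Next I would show that for a fort $F$, the conditions $S \cap N[F] = \emptyset$ and $F \subseteq V \setminus cl(N[S])$ are equivalent; this is the key step that makes the constraint (\ref{M3C3}) the right encoding of ``violated.'' For the easier direction, if some $u \in S \cap N[F]$ then either $u \in F$, contradicting $F \cap cl(N[S]) = \emptyset$ since $u \in S \subseteq cl(N[S])$, or $u$ has a neighbor $v \in F$, so $v \in N[S] \subseteq cl(N[S])$, again a contradiction. For the reverse direction I would mirror the argument used in Proposition~\ref{prop_fort}(\ref{fpd}): assuming $S \cap N[F] = \emptyset$, take the first vertex $v \in F$ to enter $cl(N[S])$; since $v \notin N[S]$, it must be forced by some $u$ whose only white neighbor at that moment is $v$. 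If $u \in F$ then $u$ was colored before $v$, contradicting minimality; if $u \notin F$ then by the fort property $u$ has a second neighbor $v' \in F$, and $v'$ is still white at the time of the force (again by minimality), contradicting uniqueness.

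Putting these pieces together, the feasible region of Model~\ref{model_constraints} is in bijection (via $F \leftrightarrow \bone_F$) with the violated forts of the current solution, and the objective is the size of $F$; hence the optimum is the minimum size of a violated fort. The main obstacle is the equivalence in the third paragraph, where the zero forcing closure intervenes in a nontrivial way, but the argument is a direct adaptation of the proof already used for Proposition~\ref{prop_fort}.
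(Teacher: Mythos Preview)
Your proof is correct and follows essentially the same approach as the paper's: establish a bijection between feasible $0/1$ vectors and violated forts by checking each constraint, and conclude by the objective function. If anything, you are more thorough than the paper, which asserts without argument that ``since $F$ is a violated fort, no vertex of $F$ can be in $cl(N[S])$''; you explicitly prove this equivalence by adapting the argument of Proposition~\ref{prop_fort}.
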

\bpf
Let $f$ be a solution of Model~\ref{model_constraints}, and let $F$ be the set of vertices of $G$ for which $f_v=1$. By constraint (\ref{M3C1}), $F$ is not empty.  By constraint (\ref{M3C2}), every neighbor of a vertex in $F$ must either be in $F$, or have at least two neighbors in $F$. Thus, $F$ is a fort of $G$. Furthermore, by constraint (\ref{M3C3}), no vertex in $F$ is in $cl(N[S])$. Therefore $F$ is a violated fort with respect to the current solution $S$ of  Model~\ref{model_forts}, and is minimum due to the objective function.

Next, let $F$ be a violated fort of $G$; let $f_v=1$ for $v \in F$, and $f_v=0$ for $v\notin F$. Since a fort is nonempty by definition, $F$ must contain at least one vertex; therefore, constraint (\ref{M3C1}) is satisfied. Also by definition, any neighbor $w$ of a vertex $v$ in $F$ must either be in $F$ or have at least one other neighbor in $F$; therefore, constraint (\ref{M3C2}) is satisfied. Since $F$ is a violated fort, no vertex of $F$ can be in $cl(N[S])$; therefore, constraint (\ref{M3C3}) is satisfied. Thus, $f$ is a feasible solution to Model~\ref{model_constraints}.
\epf

\subsection{Graphs with polynomially-many terminals}

In this section, we identify certain conditions that allow the restricted zero forcing number of a graph to be computed efficiently, given a zero forcing set of a proper subgraph.

\begin{prop}
\label{prop_terminals_cond}
Let $X$ be a minimum zero forcing set of a graph $G=(V,E)$ and let $G_T$ be the graph obtained by adding a vertex $v^*$ to $G$ and connecting it to all vertices in a set $T\subset V$. 
\begin{enumerate}
\item If $T$ is a set of terminals of forcing chains associated with $X$, then $X$ is a zero forcing set of $G_T$, i.e., $Z(G_T;X)=Z(G)$.
\item If $X$ is a zero forcing set of $G_T$, then at least one vertex in $T$ is a terminal of a forcing chain associated with $X$ in $G$.
\end{enumerate}
\end{prop}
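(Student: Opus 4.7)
The plan is to treat the two implications separately and, in each case, build or deconstruct a chronological list of forces while carefully tracking the role of the new vertex $v^*$.

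For Part 1, I would fix a chronological list $\mathcal{F}$ of forces for $X$ in $G$ whose set of terminals contains $T$ (this exists by the hypothesis that $T$ consists of terminals). The key observation is that the only $G_T$-neighbors of $v^*$ are the vertices of $T$, and no vertex in $T$ performs a force in $\mathcal{F}$, since terminals end their chains. So $\mathcal{F}$ can be executed verbatim in $G_T$ starting from $X$: for every force $a \to b$ in $\mathcal{F}$, the vertex $a$ lies outside $T$, hence is not adjacent to $v^*$, and its white-neighbor set is the same in $G$ and in $G_T$. After $\mathcal{F}$ completes, every vertex of $V(G)$ is blue and only $v^*$ is white; picking any $u \in T$, the force $u \to v^*$ completes the process. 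Thus $X$ is a zero forcing set of $G_T$, and since any zero forcing set of $G_T$ subject to $X$ must contain $X$, we get $|X| \leq Z(G_T;X) \leq |X|$, giving $Z(G_T;X) = |X| = Z(G)$.

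For Part 2, I would assume $X$ is a zero forcing set of $G_T$ and let $\mathcal{F}'$ be any chronological list of forces for $X$ in $G_T$. Since $v^* \notin X$ and $N_{G_T}(v^*) = T$, the vertex forcing $v^*$ in $\mathcal{F}'$ lies in $T$; call it $u$. I would then show that $\mathcal{F}' \setminus \{u \to v^*\}$ is a chronological list of forces for $X$ in $G$ by splitting each remaining force $a \to b$ according to when it occurs in $\mathcal{F}'$: if it happens after $u \to v^*$ then $v^*$ is already blue, so deleting it changes no white-neighbor count; if it happens before $u \to v^*$ then $v^*$ is still white, which forces $a \notin T$ (otherwise $a$ would have both $b$ and $v^*$ as white neighbors), so again the relevant neighborhood of $a$ coincides in $G$ and in $G_T$. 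Because each vertex performs at most one force in any chronological list, $u$'s only force in $\mathcal{F}'$ was $u \to v^*$, so $u$ performs no force in the resulting list for $G$ and is therefore the terminal of its forcing chain associated with $X$ in $G$.

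The step I expect to be the main obstacle is the timing analysis in Part 2: one must guarantee that deleting the single force $u \to v^*$ from $\mathcal{F}'$ yields a bona fide chronological list for $G$, which requires ruling out that some earlier force from a vertex of $T$ implicitly depended on $v^*$ being already blue, as well as ruling out that $v^*$ was a spurious extra white neighbor that would have blocked the force in $G_T$. Once this case split is in hand, identifying $u$ as a terminal in the $G$-list is immediate from the fact that each vertex can force at most once in any chronological list.
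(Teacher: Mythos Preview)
Your Part 1 is correct and matches the paper's argument.

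Your Part 2 has a genuine gap: you overlook the possibility that $v^*$ itself performs a force in $\mathcal{F}'$. Since $v^*$ is forced by some $u\in T$ and then has neighbors $T$, it can happen that at a later step all of $T\setminus\{w\}$ is blue while $w\in T$ is white, and $v^*\to w$ appears in $\mathcal{F}'$. In that case $\mathcal{F}'\setminus\{u\to v^*\}$ still contains the step $v^*\to w$, which is not a force in $G$ at all (because $v^*\notin V(G)$), so the sequence you produce is not a chronological list of forces in $G$. Your case analysis ``before/after $u\to v^*$'' implicitly assumes the forcing vertex $a$ lies in $V(G)$. A concrete instance: let $G$ be the disjoint union of the edge $1$--$2$ and the star with center $4$ and leaves $3,5,6$; take $X=\{1,5,6\}$ and $T=\{2,3\}$. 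Then $\mathcal{F}'=(1\to 2,\ 2\to v^*,\ v^*\to 3,\ 3\to 4)$ is a valid chronological list for $X$ in $G_T$, but deleting $2\to v^*$ leaves $v^*\to 3$ sitting in your list.

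The paper repairs exactly this point by splitting into two cases. If $v^*$ never forces, your argument goes through verbatim. If $v^*\to w$ occurs, the paper truncates $\mathcal{F}'$ just before this step, observes that the blue vertices $B'\subseteq V(G)$ at that moment form a zero forcing set of $G$ (indeed $B'\supseteq X$, and $X$ is already a zero forcing set of $G$ by hypothesis), and then appends any chronological list $\mathcal{F}''$ for $B'$ in $G$. Since all $G$-neighbors of $u$ were already blue when $u\to v^*$ happened, $u$ cannot force in $\mathcal{F}''$, so $u$ is a terminal in the concatenated list for $X$ in $G$. You could alternatively try to argue that $\mathcal{F}'$ can always be chosen so that $v^*$ never forces, but that claim itself requires proof; the case split is the cleaner fix.
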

\bpf
If $T$ is a set of terminals of forcing chains associated with $X$ for some chronological list of forces, then any force performed in $G$ can also be performed in $G_T$ at the same step, since $v^*$ would not interfere with any forces performed between vertices of $G$. Then, $v^*$ could be forced by any of its neighbors in $G_T$ in the last step. Thus, $X$ is also a zero forcing set of $G_T$, so $|X|\geq Z(G_T;X)\geq |X|=Z(G)$.

Now suppose $X$ is a zero forcing set of $G_T$. Let $\mathcal{F}$  be a chronological list of forces for $X$ in $G_T$, and let $u\in T$ be the vertex which forces $v^*$. If $v^*$ does not perform a force, then $\mathcal{F}\backslash (u\rightarrow v^*)$ is a valid chronological list of forces for $X$ in $G$, and $u$ is a terminal of a forcing chain since it does not force any vertex in $G$. If $v^*$ does perform a force in $G_T$, say $v^*\rightarrow w$, let $\mathcal{F}'\subset \mathcal{F}$ be the subsequence of $\mathcal{F}$ beginning with the first force and ending with the force directly before the force $v^*\rightarrow w$. Then the set of vertices in $V(G)$ colored by $\mathcal{F}'$ is also a zero forcing set of $G$. Let $\mathcal{F}''$ be a chronological list of forces for this set in $G$, and note that $u$ does not perform a force in $\mathcal{F}''$, since if it did, it could not have forced $v^*$ in $G_T$; moreover, some other vertex in $V(G)$ forces $w$. Then, combining $\mathcal{F}'$ and $\mathcal{F}''$, we obtain a chronological list of forces for $G$ associated with $X$ in which  $u$ is the terminal of a forcing chain.
\epf

Note that the necessary conditions in Proposition \ref{prop_terminals_cond} are sometimes sufficient and the sufficient conditions are sometimes necessary. Note also that while the conditions assure that $X$ is a zero forcing set of $G_T$, they do not assure that $X$ is a \emph{minimum} zero forcing set of $G_T$. For example, let $G$ be the star $K_{1,4}$ and $T$ be a set containing two leaves of $G$; then $Z(G)=3$, and any zero forcing set of $G$ is a zero forcing set of $G_T$, but $G_T$ also has a zero forcing set of size 2.

\begin{prop}
\label{prop_terminals2}
Let $X$ be a minimum zero forcing set of $G$, $\{u_1,\ldots,u_k\}$ be a set of terminals of forcing chains associated with $X$, and $H_1,\ldots,H_k$ be disjoint connected graphs with $V(H_i)\cap V(G)=\{u_i\}$ for $1\leq i\leq k$. Let $G'=G\cup H_1\cup\cdots\cup H_k$. Then
\[Z(G';X)=|X|-k+\sum_{i=1}^k Z(H_i;\{u_i\}).\]
\end{prop}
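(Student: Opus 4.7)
The plan is to prove the identity by establishing matching upper and lower bounds, each built around the same natural decomposition of a candidate zero forcing set of $G'$ into its pieces in $V(G)$ and in each $V(H_i)$.

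For the upper bound $Z(G';X)\le |X|-k+\sum_i Z(H_i;\{u_i\})$, I will exhibit an explicit zero forcing set. For each $i$, pick a minimum zero forcing set $B_i$ of $H_i$ containing $u_i$, and set $B:=X\cup\bigcup_{i=1}^{k}(B_i\setminus\{u_i\})$. Because the $u_i$ are distinct (as terminals of distinct forcing chains) and the sets $V(H_i)\setminus\{u_i\}$ are pairwise disjoint subsets of $V(G')\setminus V(G)$, a direct count gives $|B|=|X|+\sum_i(|B_i|-1)=|X|-k+\sum_i Z(H_i;\{u_i\})$. To see that $B$ zero forces $G'$, first apply the given chronological list of forces for $X$ in $G$ whose chains terminate at $u_1,\ldots,u_k$; any forcing vertex in this list is not a terminal, hence is not any $u_j$, so its $G'$-neighborhood equals its $G$-neighborhood and each force remains valid in $G'$. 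After this phase all of $V(G)$, including the $u_i$, is colored, so each $B_i$ is entirely colored. Then run the forcing process of $B_i$ inside $H_i$: for any force $v\to w$ with $v,w\in V(H_i)$, if $v\ne u_i$ then $N_{G'}(v)=N_{H_i}(v)$, while if $v=u_i$ then all $G$-neighbors of $u_i$ are already colored, so the white $G'$-neighbors of $v$ coincide with its white $H_i$-neighbors; in either case the force is valid in $G'$. This colors all of $V(H_i)$ for each $i$, proving $B$ is a zero forcing set of $G'$ containing $X$.

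For the lower bound, let $B$ be any zero forcing set of $G'$ with $X\subseteq B$, and for each $i$ define $B_i:=(B\cap V(H_i))\cup\{u_i\}$. The key claim is that $B_i$ is a zero forcing set of $H_i$. Fix a chronological list of forces $\mathcal F$ for $B$ in $G'$ and prove by strong induction on the step at which vertices of $V(H_i)$ are colored by $\mathcal F$ that each such vertex lies in the closure of $B_i$ in $H_i$. The base case is immediate since $B\cap V(H_i)\subseteq B_i$ and $u_i\in B_i$. For the inductive step, consider $v\in V(H_i)\setminus\{u_i\}$ colored by a force $v'\to v$: since $v\ne u_i$ we have $N_{G'}(v)\subseteq V(H_i)$, so $v'\in V(H_i)$; moreover $v'$ and all its $H_i$-neighbors other than $v$ were colored at earlier steps of $\mathcal F$, hence by the inductive hypothesis lie in the $H_i$-closure of $B_i$, so the force $v'\to v$ is valid inside $H_i$. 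This gives $|B_i|\ge Z(H_i;\{u_i\})$. Summing and using that $\bigcup_i(V(H_i)\setminus\{u_i\})=V(G')\setminus V(G)$ with the union disjoint, one obtains
\[
\sum_{i=1}^k|B_i|=k+|B\setminus V(G)|\ge\sum_{i=1}^kZ(H_i;\{u_i\}),
\]
and combining with $|B\cap V(G)|\ge|X|$ yields $|B|\ge|X|-k+\sum_iZ(H_i;\{u_i\})$.

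The main obstacle is verifying that $B_i$ is a zero forcing set of $H_i$ in the lower bound. The subtlety is that in the $G'$-process the vertex $u_i$ may be colored by a force originating in $V(G)\setminus V(H_i)$, an event that has no counterpart inside $H_i$; this is exactly why $u_i$ must be adjoined to $B_i$ by hand, and is also what forces the additive $-k$ correction in the final expression. Once $u_i$ is placed in $B_i$, the induction works cleanly because every other vertex of $V(H_i)$ has all of its $G'$-neighbors already inside $V(H_i)$, so its forcing history in $G'$ transfers verbatim to $H_i$.
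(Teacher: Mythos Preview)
Your proof is correct and follows essentially the same approach as the paper's: both establish the upper bound by exhibiting $X\cup\bigcup_i(B_i\setminus\{u_i\})$ as a zero forcing set of $G'$, and both prove the lower bound by showing that $(B\cap V(H_i))\cup\{u_i\}$ is a zero forcing set of $H_i$ for any $B\supseteq X$ that forces $G'$. The only difference is cosmetic: the paper first argues (using minimality) that one may assume $u_i\notin B'$ whenever $u_i\notin X$, whereas you skip this reduction and instead give a more explicit inductive verification of the key claim; both routes lead to the same count.
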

\begin{proof}
For $1\leq i\leq k$, let $X_i$ be a minimum zero forcing of $H_i$ containing $u_i$. Then, the set $X\cup ((X_1\cup \cdots \cup X_k)\backslash \{u_1,\ldots,u_k\})$ is a zero forcing set of $G'$, since $X$ can force all vertices in $V(G)$ without any of the vertices in $V(G')\backslash V(G)$ interfering in this forcing process, and then each $H_i$ can be forced independently by $X_i$. Thus, since $X$ and each $X_i\backslash\{u_i\}$ are pairwise disjoint, and since $|X_i\backslash\{u_i\}|=Z(H_i;\{u_i\})-1$, it follows that $Z(G';X)\leq |X|-k+\sum_{i=1}^k Z(H_i;\{u_i\})$.

Now suppose  that  $B'$ is a minimum zero forcing set of $G'$  that contains $X$. 
If $B'$ contains some $u_i\notin X$, then the set $B'\backslash \{u_i\}$ is also a zero forcing set of $G'$, since it contains $X$ and can therefore force $u_i$ in some step of the forcing process. Thus, without loss of generality,  $B'$ excludes all $u_i\notin X$. 
For $i=1,\dots,k$, let  $X_i=B'\cap V(H_i)$.  If $u_i\not\in B'$, then $X'_i=X_i\cup\{u_i\}$ is a zero forcing set of $H_i$, since no vertex other than $u_i$ can be forced by a vertex outside $V(H_i)$.  Similarly, for each $u_i\in B'$, $X'_i=X_i$ is a zero forcing set of $H_i$. Notice that $u_i\in X'_i$ 
for $i=1,\dots,k$. Hence, $Z(G';X)=|B'|\geq |X\cup\, \bigcup_{i=1}^k \left(X'_i\backslash \{u_i\}\right)|=|X|+\sum_{i=1}^k (|X'_i|-1)\ge |X|-k+\sum_{i=1}^k Z(H_i;\{u_i\})$.
\end{proof}

In view of Proposition \ref{prop_terminals2}, given a graph with the structure of $G'$, with subgraph $G$ and a minimum zero forcing set $X$ of $G$, one could determine whether $Z(G';X)$ is as in the statement of the proposition in polynomial time, if there are polynomially-many sets of vertices which are terminals of forcing chains associated with $X$ (since then it could be checked whether $u_1,\ldots,u_k$ belong to one of these sets). Graphs with this property include paths, cycles, and complete graphs. However, the next result shows that in general, one could not efficiently enumerate all sets which are terminals of forcing chains associated with $X$. 

\begin{prop}
\label{prop_exp2}
A graph with a fixed minimum zero forcing set or minimum power dominating set $X$ can have exponentially-many sets of vertices that are terminals of forcing chains associated with $X$.
\end{prop}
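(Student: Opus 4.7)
The plan is to prove both halves of the proposition by exhibiting an explicit family of witnessing graphs, since the claim is existential. I would take $G_k$ to be the vertex-disjoint union of $k$ copies of the $4$-cycle $C_4$, labeling the $i$-th copy's vertices as $v_{i,1}, v_{i,2}, v_{i,3}, v_{i,4}$ in cyclic order. The graph has $n=4k$ vertices, and I will argue that a single fixed minimum set $X$ already admits at least $2^k$ distinct terminal sets, which is exponential in $n$.

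For the zero forcing half, I would fix $X=\bigcup_{i=1}^k\{v_{i,1},v_{i,2}\}$, which is a minimum zero forcing set of $G_k$ of size $2k$ since $Z(C_4)=2$. Within each copy I would enumerate the chronological lists of forces and the resulting chain decompositions: either both of $v_{i,3},v_{i,4}$ are forced directly from the two initial blue vertices (giving chain terminals $\{v_{i,3},v_{i,4}\}$ in that copy), or $v_{i,2}\to v_{i,3}$ is performed and then $v_{i,3}\to v_{i,4}$ (giving terminals $\{v_{i,1},v_{i,4}\}$), or symmetrically $v_{i,1}\to v_{i,4}$ and then $v_{i,4}\to v_{i,3}$ (giving terminals $\{v_{i,2},v_{i,3}\}$). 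This yields three distinct terminal sets per copy. Because the forcing in distinct copies is causally independent (no edge connects different copies), any combination of per-copy choices is realizable by some global chronological list, and different combinations yield different terminal sets of $G_k$. Hence there are at least $3^k$ distinct terminal sets.

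For the power domination half I would use the same $G_k$ with $X=\{v_{i,1}:1\le i\le k\}$, of size $k=\gamma_P(G_k)$ since $\gamma_P(C_4)=1$. After the initial domination step every vertex except $v_{i,3}$ in each copy is blue, so the subsequent zero forcing process (on $N[X]$) is independent across copies and in each copy has the single choice of whether $v_{i,3}$ is forced by $v_{i,2}$ or by $v_{i,4}$. These two options produce chains with different endpoints in that copy, yielding two distinct terminal sets per copy and hence at least $2^k$ distinct terminal sets overall.

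The only nontrivial step I anticipate is the bookkeeping needed to certify that distinct per-copy choices yield distinct global terminal sets, and to interpret ``forcing chains associated with $X$'' correctly in the power domination case (where the chains are those of the zero forcing process initiated from $N[X]$ after the domination step). Both issues are clean consequences of the vertex-disjointness of the copies, so no delicate argument is required.
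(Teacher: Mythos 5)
Your proof is correct and follows essentially the same strategy as the paper's: a family of vertex-disjoint cycle gadgets, each admitting a constant number ($\geq 2$) of distinct per-gadget terminal sets, whose independent choices multiply across copies to give exponentially many global terminal sets. The only cosmetic difference is the gadget --- the paper attaches $k$ copies of $C_5$ to a hub vertex $x$ (yielding a connected graph, a single-vertex power dominating set $X=\{x\}$, and $\Omega(4^k)$ terminal sets), whereas you use a disjoint union of $C_4$'s (yielding $3^k$ and $2^k$ respectively), which is equally valid since the proposition imposes no connectivity requirement, and your handling of the power domination case (chains of the zero forcing process started from $N[X]$ after the domination step) matches the paper's interpretation.
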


\begin{proof}
Let $G'=(V,E)$ be the disjoint union of $k$ copies of $C_5$, where the $i^\text{th}$ copy of $C_5$ has vertex set $\{a_i,b_i,c_i,v_i,u_i\}$ and edge set $\{a_ib_i,b_ic_i,c_iv_i,v_iu_i,u_ia_i\}$. Let $G=(V\cup \{x\}, E\cup \{xv_i:1\leq i\leq k\}\cup \{xu_i:1\leq i\leq k\})$, and $X=\{x\}$. It is easy to see that $X$ is a minimum power dominating set of $G$. Initially $x$ dominates $\{u_i:1\leq i\leq k\}\cup\{v_i:1\leq i\leq k\}$. In each copy of $C_5$, possible forcing chains associated with $u_i$ and $v_i$ include $\{v_i\rightarrow c_i\rightarrow b_i\rightarrow a_i\}$, $\{u_i\rightarrow a_i,v_i\rightarrow c_i\rightarrow b_i\}$, $\{u_i\rightarrow a_i\rightarrow b_i,v_i\rightarrow c_i\}$, and $\{u_i\rightarrow a_i\rightarrow b_i\rightarrow c_i\}$, with sets of terminals $\{u_i,a_i\}$, $\{a_i,b_i\}$, $\{b_i,c_i\}$, and $\{c_i,v_i\}$, respectively. Thus, there are $\Omega(4^k)$ distinct sets of vertices that are terminals of forcing chains associated with $X$. 

Similarly, $X=\{a_1\}\cup\{u_i:1\leq i\leq k\}\cup\{v_i:3\leq i\leq k\}$ is a minimum zero forcing set of $G$ and there are $\Omega(4^{k-1})$ distinct sets of vertices which are terminals of forcing chains associated with $X$.
\end{proof}

Finally, we briefly discuss when the quantity characterized in Proposition \ref{prop_exp2} is  polynomial in the order of the graph. Trivially, if $Z(G)=O(1)$ or $Z(G)=n-O(1)$, then there are polynomially-many sets of vertices which are terminals of forcing chains associated with some fixed zero forcing set $X$. It could also happen that these cardinalities are bounded by a polynomial even when $Z(G)$ does not have an extremal value.


\section{Concluding remarks}\label{sconc}

In this paper, we have studied the restricted power domination and restricted zero forcing problems, which have practical applications in power network monitoring and control of quantum systems. In particular, we have derived tight bounds on the restricted power domination and restricted zero forcing numbers of graphs, and related them to their non-restricted analogues as well as to other graph parameters. We also presented exact and algorithmic results for computing the restricted power domination and restricted zero forcing numbers of several families of graphs, including graphs with bounded treewidth, and graphs whose subgraphs have zero forcing sets with polynomially many sets of terminals.  We used the notion of restricted power domination to develop a parallel algorithm for the power domination number of trees, whereby a tree is partitioned into smaller trees whose power dominating sets can be computed independently.

We also presented integer programming formulations for computing the restricted power domination numbers of arbitrary graphs; these models use a set cover framework with constraint generation, rather than a step-dependent formulation  that is sometimes used in the literature (see, e.g., \cite{fan_watson} and the bibliography therein). 

Future work can focus on deriving other bounds on the restricted power domination and zero forcing numbers, and relating them to other restricted domination parameters in the framework of the work of Sanchis, Goddard, and Henning \cite{GH07,S97,H02}. The proposed exact algorithms can also be extended to more general graphs. For instance, since the power domination number of block graphs can be computed efficiently \cite{Wang2016,Xu06}, it may be possible to extend Theorems \ref{computeTC} and \ref{prop_tree_parallel} to block graphs, i.e., to compute the restricted power domination number of a block graph in $O(n)$ time, and to compute the power domination number of a block graph in parallel using restricted power domination. Computational results for general graphs and refinements of the integer programming Models \ref{model_forts} and \ref{model_constraints} can also be pursued.

\section*{Acknowledgements}
This research began at the American Institute of Mathematics workshop {\em Zero forcing and its applications} with support from National Science Foundation, DMS-1128242. The work of BB is supported in part by the National Science Foundation, Grant No. 1450681.  The authors thank AIM and NSF.

\end{document}